\documentclass[leqno]{article}

\usepackage{amsmath,amssymb,amsthm,bm,mathrsfs}
\newcommand{\R}{\mathbb{R}}
\newcommand{\N}{\mathbb{N}}

\newcommand{\PP}{\mathbb{P}}

\newcommand{\Div}{\mathrm{div}\,}

\newcommand{\pt}{\partial}

\DeclareMathOperator*{\esup}{\text{ess\,sup}}

\newtheorem{proposition}{Proposition}[section]
\newtheorem{theorem}{Theorem}[section]
\newtheorem{lemma}{Lemma}[section]
\newtheorem{corollary}[theorem]{Corollary}

\theoremstyle{definition}
\newtheorem{remark}{Remark}[section]

\makeatletter

\@addtoreset{equation}{section}
\makeatother

\setlength{\hoffset}{-0.04truecm}
\setlength{\oddsidemargin}{0truecm}
\setlength{\textwidth}{16truecm}
\setlength{\marginparsep}{0truecm}
\setlength{\marginparwidth}{0truecm}
\setlength{\voffset}{-0.04truecm}
\setlength{\headsep}{5truemm}
\setlength{\topmargin}{-5truemm}
\setlength{\textheight}{23cm}
\usepackage{layout}
\usepackage{xcolor}
\title{Annihilation of slowly-decaying terms of Navier-Stokes flows \\ by external forcing}
\author{
Lorenzo Brandolese \\
{\normalsize Institut Camille Jordan} \\
{\normalsize Universit\'{e} Lyon 1 }\\
{\normalsize E-mail:~\texttt{brandolese@math.univ-lyon1.fr}}
\and
Takahiro Okabe \\
{\normalsize Graduate School of Engineering Science}\\
{\normalsize Osaka University}\protect\\
{\normalsize E-mail:~\texttt{okabe@sigmath.es.osaka-u.ac.jp}}
}
\date{}
\begin{document}
\maketitle
\begin{abstract}
The goal of this paper is to provide an algorithm that, for any sufficiently localised, divergence-free small initial data, explicitly constructs a localised external force
leading to a rapidly dissipative solutions of the Navier--Stokes equations $\R^n$: namely, the energy decay rate of the flow will be forced to satisfy $\|u(t)\|_2^2 =o(t^{-(n+2)/2})$ as $t \to \infty$, which is beyond the usual optimal rate.
 An important feature of our construction is that this force can always be taken compactly supported in space-time, and its profile arbitrarily prescribed up to a spatial rescaling.
Since the forcing term vanishes after a finite time interval, our result suggests that nontrivial interactions between the linear and nonlinear parts occur, annihilating all the slowly decaying terms
contained in  Miyakawa and Schonbek's asymptotic profiles.

\end{abstract}
\textbf{Key word:} Navier-Stokes equations, Energy decay, Asymptotic profiles.
\\
\textbf{MSC(2010):} 35Q30; 76D05
\section{Introduction}
Let $n\geq 2$. We consider the incompressible Navier-Stokes equations in $\R^n$:
\begin{equation*}\tag{N-S}
\left\{
\begin{split}
&\pt_t u -\Delta u + u\cdot \nabla u 
+ \nabla \pi=\nabla \cdot {f}
\qquad \text{in } \R^n\times (0,\infty),\\
& \Div u=0
\qquad \text{in } \R^n\times (0,\infty),\\
&u(\cdot,0)=a 
\qquad \text{in } \R^n,
\end{split}\right.
\end{equation*}
where $u=u(x,t)=\bigl(u_1(x,t),\dots,u_n(x,t)\bigr)$ and $\pi=\pi(x,t)$ 
denote the unknown velocity and the pressure of the fluid at $(x,t)\in \R^n\times (0,\infty)$, respectively, while, 
$f=f(x,t)=\bigl( f_{k\ell}(x,t)\bigr)_{k,\ell =1,\dots,n}$ 
denotes the external forcing tensor 
and $a=a(x)=\bigl(a_1(x),\dots,a_n(x)\bigr)$ denotes the given initial data.

Starting with the celebrated work 
of Leray \cite{Leray 1934}, the time decay problem has been a major issue in the mathematical study of fluid flows.
Masuda \cite{Masuda}, Schonbek \cite{Schonbek 1985}, Kajikiya and Miyakawa \cite{Kajikiya Miyakawa} and Wiegner \cite{Wiegner}, for instance, obtained pioneering contributions in this direction.
Their results imply that, in the absence of external forcing, the optimal decay rate for a weak solution is 
\begin{equation}\label{eq;optimaldecay}
\|u(t)\|_2 \leq C(1+t)^{-\frac{n+2}{4}}, \qquad t>0,
\end{equation}
for initial data~$a$ in $L_\sigma^2(\R^n)$ under suitable additional conditions. To this purpose, the moment condition $\int_{\R^n} (1+|x|)|a(x)|\text{\,d}x<\infty$, for example, would be enough.
Subsequently, Fujigaki-Miyakawa \cite{Fujigaki Miyakawa SIAM} clarified that the decay rate in the right-hand side of~\eqref{eq;optimaldecay}
actually describes the decay rate of the nonlinear terms. Indeed, they derived the asymptotic expansion of the linear part and of the nonlinear part as follows
\begin{equation*}
\lim_{t\to\infty}
t^{\frac{1}{2}+\frac{n}{2}(1-\frac{1}{q})}
\left\|
u_j(t)+\sum_{k=1}^n(\pt_k E_t)(\cdot)\int_{\R^n}y_k a_j(y)\,dy
+
\sum_{\ell,k=1}^n F_{\ell k,j}(\cdot,t)\int_0^\infty\!\!\!\int_{\R^n}
(u_\ell u_k)(y,s)\text{\,d}y\text{\,d}s
\right\|_q=0,
\end{equation*}
for all $j=1,\dots, n$ and for all $1 \leq q \leq \infty$,
where $E_t(x)=(4\pi t)^{-n/2}\exp \bigl(-\frac{|x|^2}{4t}\bigr)$ is the heat kernel and 
\[
F_{\ell k, j}(x,t)=
\pt_\ell E_t(x)\delta_{j k} + \int_t^\infty
\pt_\ell \pt_k \pt_j E_s(x)\text{\,d}s,
\]
where $\delta_{jk}$ is the Kronecker symbol.

Miyakawa and Schonbek \cite{Miyakawa Schonbek}
deduced from the above asymptotic profile necessary and sufficient conditions ensuring that
the flow is \emph{rapidly dissipative}, in the sense that
\[
\lim_{t\to\infty}
t^{\frac{1}{2}+\frac{n}{2}(1-\frac{1}{q})}
\left\|
u(t)
\right\|_q=0, \qquad 1\le q\le\infty.
\]
For initial data with finite moments up to the first order, these conditions read
\begin{subequations}
\begin{equation}\label{MSconditionL}
\int_{\R^n} y_k a_j(y)\,dy =0 \qquad \text{for } j,k=1,\dots, n, 
\end{equation}
and
\begin{equation}\label{MSconditionN}
\exists\,c\in\R\quad\text{such that}\quad
\int_0^\infty\!\!\!\int_{\R^n} (u_k u_\ell)(y,s)\text{\,d}y\text{\,d}s = c\,\delta_{k \ell}
\qquad \text{for } k,\ell=1,\dots, n.
\end{equation}
\end{subequations}

Condition~\eqref{MSconditionN} is difficult to check, as it requires information of the (unknown) flow 
over whole space-time region.
For this reason, it is usually not possible to predict whether or not a given flow is rapidly dissipative.
To overcome this difficulty, and to make evidence that condition~\eqref{MSconditionN} is not reduced to the identically zero solution,
one could restrict the problem to flows invariant
under the action of suitable symmetry groups.
For example, in 2D one could consider flows with radial vorticity, but this idea is no longer effective in 3D because of topological obstructions.  The first author, to circumvent this difficulty, considered to this purpose cyclically symmetry of the flow, i.e.,
\begin{itemize}
\item[(a)] $u_j(\cdot,t)$ is odd in $x_j$ and even in each other variables,
\item[(b)] 
$
u_1(x_1,\dots,x_n,t)
=u_2(x_n,x_1,\dots,x_{n-1},t)
=\dots
=u_n(x_2,\dots,x_n,x_1,t).
$
\end{itemize}
These conditions are easier to check, as they are preserved during the evolution (for the unique strong solution, if this is known to exist, but also for any Leray solutions constructed so far)
if they are satisfied for the initial data.
In the class of flows satisfying (a) and (b), 
the first author \cite{Brandolese 2001}, Miyakawa \cite{Miyakawa 2002 FE, Miyakawa 2002 Hiroshima} made evidence of the existence of rapidly dissipative solutions. They also observed that the faster decay of these solutions agrees with the rate of the
second-order, or of the third order terms, in the asymptotic expansion of the flow. 
Later, the second author and Tsutsui \cite{Okabe Tsutsui JDE} gave a generalization of 
\cite{Brandolese 2004}, \cite{Miyakawa 2002 FE, Miyakawa 2002 Hiroshima} with weighted Hardy spaces.
More general group actions were discussed by the first author \cite{Brandolese MA 2004}.

However, symmetry conditions, like (a) and (b), look somewhat artificial.
In particular, these symmetric flows are non-stable in the class of generic flows: to make them physically realistic, an additional control $f(x,t)$ acting in the whole time interval $(0,+\infty)$ should be required:
it is therefore a natural problem to see how a generic flow (featuring no
special symmetry) could evolve into a rapidly dissipative flow  during the evolution, thanks to some other process, implying the annihilation of the slowly decaying terms of its asymptotic profiles.
The purpose of this paper is to address this issue.

As mentioned above, the essential difficulty will be 
the verification of \eqref{MSconditionN}.
Instead of the cyclic symmetry, 
for any initial velocity which is small in a suitable sense, we construct an associated external force and a rapidly dissipative solution of the forced Navier-Stokes equations.

This approach not only looks mathematically natural, 
but also realistic in physics or engineering, as
the flow will be forced to slow down in the large time at faster rates,   
through the introduction of a \emph{ad hoc} forcing term, depending on the given initial state, that will act only on a bounded region and over a finite time interval, that can be taken arbitrarily short.

For a given initial velocity field $a$, our strategy will be to provide an algorithm, leading in the limit to the construction of a force of divergence form  $\nabla \cdot f$, compactly supported in space-time, so that, for $j=1,\ldots,n$,
\begin{equation}\label{eq;fasympt}
\int_0^t  [e^{(t-s)\Delta}\PP \nabla \cdot f]_j(s)\text{\,d}s 
\sim 
\sum_{k,\ell=1}^n F_{\ell k, j}(\cdot,t)
\int_0^\infty \int_{\R^n} (u_\ell u_k)(y,s)\text{\,d}y\text{\,d}s
\qquad \text{for large }t>0.
\end{equation}
This force will counterbalance the slowly decaying nonlinear terms appearing in Fujigaki and Miyakawa's asympotic profile.
Here $\PP$ is  the Leray-Hopf (also called after Weyl-Helmholtz or Fujita-Kato) projection onto solenoidal vectors.
For the realization of \eqref{eq;fasympt}, 
we introduce the following computable procedure:
\begin{equation}\label{OURmethod}
u^{(m)}(t)=e^{t\Delta}a 
+
\int_0^t  e^{(t-s)\Delta}\PP
\nabla \cdot f^{(m)}(s)\text{\,d}s
-\int_0^t  e^{(t-s)\Delta} \PP \nabla \cdot
(u^{(m)}\otimes u^{(m)})(s)\text{\,d}s \qquad m=1,2,\dots.
\end{equation}
Here, the forcing tensor $f^{(m)}=(f^{(m)}_{k\ell})$ is given by $f^{(0)}\equiv 0$ and
\begin{equation}
\label{algo}
f_{k\ell}^{(m)}(x,t)=\begin{cases}
c_{k\ell}^{(m-1)}\phi(x,t), & k\neq \ell,\\
(c_{kk}^{(m-1)}-\bar{c}^{(m-1)})\phi(x,t), & k=\ell,
\end{cases}
\end{equation} 
for some function $\phi\in C_0^\infty(\R^n\times [0,\infty))$, where 
$c_{k \ell}^{(m)}
=\int_0^\infty\!\!\int_{\R^n} 
(u_k^{(m)} u_\ell^{(m)})(y,s)\text{\,d}y\text{\,d}s$ 
and
$\bar{c}^{(m)}=c_{11}^{(m)}+\dots+c_{nn}^{(m)}$.
We note that since we are able to take $\phi$ compactly supported in both space and time, 
in order to control the flow,  
it is enough that the force is applied to finite time and bounded space region. 
Remarkably, as we will see, the profile $\phi$ 
can be prescribed in an essentially arbitrary way.

For our approach to be effective,
 it will be crucial to derive a bound $K$, \emph{independent of $m$},  such that
\begin{equation}\label{BoundC}
|c_{k\ell}^{(m)}| =\biggl|\int_0^\infty \!\!\!\int_{\R^n}
(u_k^{(m)} u_\ell^{(m)})(y, s)\text{\,d}y\text{\,d}s \biggr|
\leq
\int_0^\infty \| u^{(m)}(s) \|_2^2 \text{\,d}s
\le K.
\end{equation}
Condition~\eqref{BoundC} would follow from suitable $L^2$-decay estimates. 
A natural idea would consist in relying on a Fourier splitting technique to get estimates
for the $L^2$-norm of $u^{(m)}$, with the needed rate. See
\cite{Schonbek 1985,Kajikiya Miyakawa,Wiegner}. 
But in our context, the Fourier splitting method seems to have a slight drawback: 
in decay estimates like  \eqref{eq;optimaldecay}, the constant $C$ appearing in the right-hand side depends not only on the size of the data, but also, in a quite complicated way, on the shape of the data and of the force. This is a serious issue, as in our case we have to deal with a recursively defined  sequence of forces: all the constants in $L^2$ decay estimates a priori depend on~$m$ and this
makes estimate~\eqref{BoundC} not so
straightforward.

On the other hand, one could also apply a  
Fujita-Kato method, see Fujita and Kato \cite{Fujita Kato} and
Kato \cite{Kato}. In this case the difficulty is that the size of $f^{(m)}$, i.e.,  of the coefficients $c_{k\ell}^{(m-1)}$ has \emph{a priori} an influence on the lifetime of the mild solutions $u^{(m)}$, so an important technical issue arises: to prove that
the recursively defined solutions  $u^{(m)}$ are indeed global in time, for all natural integer $m$.

Due to the above difficulties, 
we need to develop an alternative approach
and to establish 
time decay estimates, as in
\eqref{eq;optimaldecay}, by carefully making explicit how 
the constants depend on the given data $a$, on
$\phi$ and on the dimension $n$.
For this purpose, weighted Hardy spaces 
would be an effective tool.
Indeed, the second author and Tsutsui \cite{Okabe Tsutsui JDE} introduced weighted Hardy spaces to derive higher order asymptotic expansions:
weighted Hardy spaces
enable us to deal with higher order weights 
and to obtain more rapid decay 
compared with the weighted Lebesgue spaces.
With the aid of the weighted Hardy norm,
we could make a specific refinement 
of the Fujita-Kato iteration scheme, 
first giving bounds as in \eqref{BoundC},
next ensuring the convergence 
of our procedure \eqref{OURmethod} toward an external force $f$ and to a
rapidly dissipative solution~$u$.

In this paper, however, we will adopt an alternative strategy: this will consist in the systematic derivation
of \emph{quantified scale-invariant decay estimates} for the $L^2$-norm of the solution.
Let us illustrate what we mean by a specific example:
in the case $n=3$, for well localised initial data $a$ such that $\|a\|_3<\!\!\!<1$ and in the case $f\equiv0$,
we will derive the estimate
\begin{equation}
\label{3scale}
 \|u(t)\|_2\le \min\Bigl\{\|a\|_2\,,\, c K(a)t^{-5/4}\Bigr\}
\end{equation}
where $c$ is an absolute constant and
\[
K(a)=\textstyle\int_{\R^3}|x|\,|a(x)|{\rm\,d}x+
\|a\|_1^{1/2}\|a\|_2\Bigl(\textstyle\int_{\R^3}|x|\,|a(x)|{\rm \,d}x\Bigr)^{1/2}
+\|a\|_{4/3}^{2}\|a\|_2.
\]
Estimate~\eqref{3scale} is what we call the ``quantified scale-invariant'' version of classical Wiegner's estimate, 
$\|u(t)\|_2\le C(1+t)^{-5/4}$. Its crucial advantage is that the dependence of the constant $C$ on the initial data is made completely explicit. Notice that the three terms defining $K(a)$ rescale
in the same way, so~\eqref{3scale} is indeed scale-invariant. 
Uniform bounds of the form~\eqref{BoundC} will rely on such type of estimates.

This paper is organised as follows.
We state our main result, Theorem~\ref{thm;2}, in Section 2.
In Section 3 we recall some classical estimates.
Section 4 is entirely devoted to the proof of our main result.
We begin in Subsection~\ref{sub:0} with considering the case  of a vanishing force $f^{(0)}=0$, and we present a slight refinement of Fujita-Kato scheme, with the purpose of 
putting in evidence the dependance of all constants
 appearing in the fixed point argument. Here we also establish a class of quantified scale invariant estimates, in the same spirit as~\eqref{3scale}.
In Subsection~\ref{sec:induci}, we introduce the sequence of external tensors 
$\{f^{(m)}\}$ and we construct the associated solutions
$u^{(m)}(t)$ of (N-S), $m=0,1,2,\dots$, of \eqref{OURmethod}.
The main step of this section is the conclusion that these solutions $u^{(m)}$ are all global in time,
under appropriate smallness conditions on the data and that all the relevant estimates are independent on~$m$.
Finally, we complete the proof of Theorem \ref{thm;2} by proving the convergence of $u^{(m)}$ and
of $f^{(m)}$ and the decay estimate for the limit solution.

\section{The main result}

Our starting point will be the following result of Fujigaki and Miyakawa~\cite{Fujigaki Miyakawa SIAM}.

\begin{theorem}
\label{the;FMF}
Let $a \in L^1(\R^n)\cap L^n_{\sigma}(\R^n)$ with
$\int_{\R^n} |x||a(x)|\,dx<\infty$
and $f \in C_c^\infty
\bigl(\R^n\times [0,\infty)\bigr)$.
Suppose $u\in BC\bigl([0,\infty);L^n_\sigma(\R^n)\bigr)$ is a global mild solution of (N-S). If $n\geq 5$, assume also that $\liminf\limits_{t\to\infty}\|u(t)\|_n$ is sufficiently small.
Then it holds that
\begin{multline*}
\lim\limits_{t\to\infty}
t^{\frac{1}{2}+\frac{n}{2}(1-\frac{1}{q})}
\Big\|u_j(t)
+\sum_{k=1}^{n}(\pt_k E_t)(\cdot)\int_{\R^n}y_ka_j(y)\,dy
\\
-
\sum_{k,\ell=1}^nF_{\ell k,j}(\cdot,t)\int_0^\infty\!\!\!\int_{\R^n} f_{k\ell}(y,s)\text{\,d}y\text{\,d}s
+
\sum_{k,\ell=1}^n F_{\ell k,j}(\cdot,t)\int_0^\infty \!\!\!\int_{\R^n} (u_\ell u_k)(y,s)
{\,\rm d}y{\,\rm d}s\Big\|_q=0
\end{multline*}
for $1 \leq q \leq \infty $.
\end{theorem}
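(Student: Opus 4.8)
The strategy is to write the mild solution via Duhamel's formula and analyze the long-time behaviour of each term separately, reproducing the argument of Fujigaki and Miyakawa for the unforced case and checking that the extra forcing term $\int_0^t e^{(t-s)\Delta}\PP\nabla\cdot f(s)\,ds$ contributes exactly the profile $\sum_{k,\ell}F_{\ell k,j}(\cdot,t)\int_0^\infty\!\!\int_{\R^n}f_{k\ell}(y,s)\,dy\,ds$, up to an error that is $o\bigl(t^{-1/2-\frac n2(1-1/q)}\bigr)$ in $L^q$. Write
\[
u(t)=e^{t\Delta}a+\int_0^t e^{(t-s)\Delta}\PP\nabla\cdot f(s)\,ds-\int_0^t e^{(t-s)\Delta}\PP\nabla\cdot(u\otimes u)(s)\,ds=:L(t)+G_f(t)-N(t).
\]
The linear part $L(t)$ is handled exactly as in the classical theory: the first-order Taylor expansion of the heat kernel gives $e^{t\Delta}a_j(x)=-\sum_k\pt_k E_t(x)\int y_k a_j(y)\,dy+o\bigl(t^{-1/2-\frac n2(1-1/q)}\bigr)$, using $a\in L^1$ with finite first moment; note $\PP$ acts trivially on the leading asymptotics here because $\Div a=0$ already kills the relevant component, or more precisely one absorbs the Helmholtz projector into the definition of the profile as in \cite{Fujigaki Miyakawa SIAM}.

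The two remaining terms have the same structure, since both $f$ and $u\otimes u$ are (roughly) $L^1$ in space-time with appropriate decay. The key is the kernel identity underlying the definition of $F_{\ell k,j}$: one shows that for a tensor $g=(g_{k\ell})$,
\[
\int_0^t e^{(t-s)\Delta}\PP\nabla\cdot g(s)\,ds\Big|_j\ \sim\ \sum_{k,\ell=1}^n F_{\ell k,j}(\cdot,t)\int_0^\infty\!\!\int_{\R^n}g_{k\ell}(y,s)\,dy\,ds
\]
in $L^q$ with the stated rate. This is precisely the content of the computation in \cite{Fujigaki Miyakawa SIAM}: split the time integral at $t/2$; on $(0,t/2)$ replace $g(y,s)$ by its space-time mass and Taylor-expand the kernel $\pt_\ell(e^{(t-s)\Delta}\PP)$ around $s=0$ and $y=0$, which produces the leading term $\pt_\ell E_t\,\delta_{jk}+\int_t^\infty\pt_\ell\pt_k\pt_j E_\tau\,d\tau$ (this is exactly $F_{\ell k,j}$ — the $\delta_{jk}$ piece comes from the identity component of $\PP$ and the triple-derivative integral from its nonlocal part); on $(t/2,t)$ one uses the smoothing estimate $\|\pt_\ell e^{(t-s)\Delta}\PP h\|_q\lesssim (t-s)^{-1/2}\|h\|_q$ together with the decay/integrability of $g$. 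For $g=f\in C_c^\infty(\R^n\times[0,\infty))$ everything is trivial because $f$ is compactly supported in time, so only finitely much of the time integral matters and all moments are finite. For $g=u\otimes u$ one needs $u\otimes u\in L^1\bigl((0,\infty);L^1(\R^n)\bigr)$ with enough decay; this is where the hypotheses $a\in L^1\cap L^n_\sigma$, finite first moment, and (for $n\ge5$) smallness of $\liminf_t\|u(t)\|_n$ enter, guaranteeing the $L^2$-decay $\|u(t)\|_2\lesssim(1+t)^{-(n+2)/4}$ and hence $\int_0^\infty\|u(s)\|_2^2\,ds<\infty$, plus the pointwise-in-$s$ spatial decay needed to control the $y$-moments in the Taylor remainder.

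I expect the main obstacle to be the careful bookkeeping of the error terms in the bilinear piece $N(t)$: one must show that both the far-time tail $\int_{t/2}^t$ and the Taylor remainders in the near-time part decay strictly faster than $t^{-1/2-\frac n2(1-1/q)}$, and this requires combining the optimal $L^2$-decay of $u$ with interpolation and the $L^p$–$L^q$ heat estimates in a dimension-dependent way, which is exactly why the extra smallness assumption is imposed when $n\ge5$. Once the unforced case is in hand (i.e.\ the result of Fujigaki–Miyakawa as originally stated), the only genuinely new point is the forcing term, and since $f$ is smooth and compactly supported in space-time this term is the easiest of the three to analyze — it reduces to a finite, smooth integral to which the kernel expansion applies verbatim. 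Thus the proof is essentially a reduction to \cite{Fujigaki Miyakawa SIAM} plus the elementary treatment of $G_f$; I would present it by first recalling the kernel lemma for a generic source tensor $g$, then applying it once with $g=f$ and once with $g=u\otimes u$, and finally adding the linear asymptotics.
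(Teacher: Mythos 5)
Your proposal follows exactly the route the paper intends: the paper in fact omits the proof of Theorem~\ref{the;FMF}, remarking only that the argument is the same as in Fujigaki--Miyakawa's unforced case and that the Duhamel term of $f$ is treated in complete analogy with the nonlinear term, which is precisely your reduction to a kernel lemma applied once with $g=f$ and once with $g=u\otimes u$. Your outline is a correct and somewhat more detailed account of that same strategy.
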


 Though \cite{Fujigaki Miyakawa SIAM} dealt with only the case $f\equiv 0$, the proof is essentially same. 
The derivation of the leading order term for the Duhamel term of $f$ is just analogue to that for the nonlinear term. We omit the proof.
As an immediate consequence from Miyakawa and Schonbek \cite{Miyakawa Schonbek}, the condition associated with
\eqref{MSconditionN} is modified as follows:
\begin{corollary}\label{cor;MS}
Let $a$, $f$ and $u$ be as in Theorem~\ref{the;FMF}. It holds that,
\begin{equation*}
\label{q-adec}
\lim_{t\to \infty} 
t^{\frac{1}{2}+\frac{n}{2}(1-\frac{1}{q})}
\| u(t)-e^{t\Delta}a\|_q=0 , \qquad (1\leq q \leq \infty)
\end{equation*}
if and only if there exists $c\in\R$ such that
\begin{equation}\label{eq;MSconditionF}
\int_0^\infty \!\!\!\int_{\R^n} 
f_{k\ell}(y,s){\,\rm d}y{\,\rm d}s
-\int_0^\infty\!\!\!\int_{\R^n} (u_\ell u_k)(y,s){\,\rm d}y{\,\rm d}s=c\,\delta_{k \ell}
\qquad(k,\ell=1,\dots,n).
\end{equation}
Moreover,
\begin{equation*}
\label{q-dec}
\lim_{t\to \infty} 
t^{\frac{1}{2}+\frac{n}{2}(1-\frac{1}{q})}\| u(t)\|_q=0  \qquad (1\leq q \leq \infty)
\end{equation*}
if and only if condition~\eqref{eq;MSconditionF} holds and also
$\int_{\R^n} y_k a(y){\rm \,d}y =0$  for all $k=1,\ldots,n$.

Furthermore, if \eqref{eq;MSconditionF} does not hold, or if at least one of the first-order moments of $a$ does not vanish, then 
$\liminf\limits_{t\to\infty}
t^{\frac{1}{2}+\frac{n}{2}(1-\frac{1}{q})}
\|u(t)\|_q >0$
for $1\leq q \leq \infty$.
\end{corollary}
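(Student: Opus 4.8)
The plan is to deduce Corollary~\ref{cor;MS} from the asymptotic profile of Theorem~\ref{the;FMF} by reducing each of the three assertions to the vanishing of an \emph{explicit self-similar vector field}, and then to invoke the algebraic characterization of Miyakawa and Schonbek. First I would record the profile in a convenient form: write $m_{jk}=\int_{\R^n}y_k a_j(y)\,dy$ and $b_{k\ell}=\int_0^\infty\!\!\int_{\R^n} f_{k\ell}\,dy\,ds-\int_0^\infty\!\!\int_{\R^n}(u_\ell u_k)\,dy\,ds$, so that Theorem~\ref{the;FMF} is exactly the statement
\[
u_j(\cdot,t)=-\sum_{k=1}^n(\pt_k E_t)(\cdot)\,m_{jk}+\sum_{k,\ell=1}^nF_{\ell k,j}(\cdot,t)\,b_{k\ell}+R_j(\cdot,t),
\]
with $\|R_j(\cdot,t)\|_q=o\bigl(t^{-\frac12-\frac n2(1-\frac1q)}\bigr)$ for every $j$ and every $q\in[1,\infty]$. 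Since $a\in L^1(\R^n)$ with $\Div a=0$, its zeroth moment vanishes ($\widehat a$ is continuous and $\xi\cdot\widehat a(\xi)\equiv0$ forces $\widehat a(0)=0$), so the classical first-order heat expansion gives $e^{t\Delta}a_j=-\sum_k(\pt_kE_t)m_{jk}+o\bigl(t^{-\frac12-\frac n2(1-\frac1q)}\bigr)$ in $L^q$; subtracting, $u_j(\cdot,t)-e^{t\Delta}a_j=\sum_{k,\ell}F_{\ell k,j}(\cdot,t)b_{k\ell}+o\bigl(t^{-\frac12-\frac n2(1-\frac1q)}\bigr)$ in $L^q$.

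The key point is that both building blocks $\sum_k(\pt_kE_t)(x)m_{jk}$ and $\sum_{k,\ell}F_{\ell k,j}(x,t)b_{k\ell}$ are homogeneous of precisely the degree for which $t^{\frac12+\frac n2(1-\frac1q)}\|\cdot\|_q$ is independent of $t$, so each of them either vanishes identically or has $L^q$-norm bounded below by $c\,t^{-\frac12-\frac n2(1-\frac1q)}$ with $c>0$. To decide which, I would take the spatial Fourier transform: writing $B=(b_{k\ell})$ and $M=(m_{jk})$, a direct computation using $F_{\ell k,j}(x,t)=\pt_\ell E_t\,\delta_{jk}+\int_t^\infty\pt_\ell\pt_k\pt_jE_s\,ds$ gives $\widehat{\sum_{k,\ell}F_{\ell k,j}b_{k\ell}}(\xi,t)=i\,e^{-t|\xi|^2}\bigl[(B\xi)_j-\xi_j|\xi|^{-2}\,\xi^\top\!B\,\xi\bigr]$ and $\widehat{\sum_k(\pt_kE_t)m_{jk}}(\xi,t)=i\,e^{-t|\xi|^2}(M\xi)_j$. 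The first expression vanishes for all $\xi$ and $j$ iff $B\xi$ is parallel to $\xi$ for every $\xi$, i.e. iff $B=c\,I$ for some $c\in\R$ (every vector is an eigenvector); the second vanishes iff $M=0$, i.e. iff all first-order moments of $a$ vanish. This is exactly Miyakawa and Schonbek's characterization, which I would cite rather than re-derive.

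Putting these together gives the three claims. The first equivalence: $t^{\frac12+\frac n2(1-\frac1q)}\|u(t)-e^{t\Delta}a\|_q\to0$ iff $\sum_{k,\ell}F_{\ell k,j}b_{k\ell}\equiv0$ for all $j$, iff $B=cI$, which is \eqref{eq;MSconditionF}. The second equivalence: $t^{\frac12+\frac n2(1-\frac1q)}\|u(t)\|_q\to0$ iff the full self-similar profile $\Theta_j:=-\sum_k(\pt_kE_t)m_{jk}+\sum_{k,\ell}F_{\ell k,j}b_{k\ell}$ vanishes for all $j$; to turn this into the two separate conditions I would use that $M$ is antisymmetric — a consequence of $\Div a=0$, since $m_{jk}+m_{kj}=\int_{\R^n}\Div(y_jy_k a)\,dy=0$ — and the fact that in Fourier space the linear piece is a degree-one polynomial times $e^{-t|\xi|^2}$ whereas the nonlinear-plus-forcing piece fails to be a polynomial unless the symmetric traceless part of $B$ is zero; matching forces that part to vanish and $M$ to equal the antisymmetric part of $B$, after which antisymmetry of $M$ together with \eqref{eq;MSconditionF} yields $M=0$ and $B=cI$ separately. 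Finally, the ``Furthermore'' statement is immediate from the same self-similarity: if \eqref{eq;MSconditionF} fails or some first moment is nonzero, then $\Theta_j\not\equiv0$ for some $j$, whence $\liminf_{t\to\infty}t^{\frac12+\frac n2(1-\frac1q)}\|u(t)\|_q\ge c\,\|\Theta_j(\cdot,1)\|_q>0$.

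The main obstacle is the harmonic-analytic lemma underlying the two Fourier identities and, within the second equivalence, the ``no-cancellation'' step — the assertion that the explicit self-similar profiles generated by $(m_{jk})$ and by $(b_{k\ell})$ are, in the relevant sense, in direct sum. Once Miyakawa--Schonbek's result is invoked this is essentially bookkeeping; the only genuine technical point is justifying $\int_{\R^n}\Div(y_jy_k a)\,dy=0$, hence the antisymmetry of $M$, at the borderline first-moment assumption, which I would handle by a routine truncation/approximation argument.
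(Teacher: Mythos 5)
Your overall strategy---rewriting the profile of Theorem~\ref{the;FMF} as $u_j=-\sum_k(\pt_kE_t)m_{jk}+\sum_{k,\ell}F_{\ell k,j}b_{k\ell}+o(\cdot)$, exploiting the exact self-similarity of the two building blocks to reduce each assertion to the vanishing of an explicit profile, and characterizing that vanishing on the Fourier side---is precisely the Miyakawa--Schonbek argument that the paper invokes without writing out, and your treatment of the first equivalence (vanishing of $\sum_{k,\ell}F_{\ell k,j}b_{k\ell}$ iff $B\xi\parallel\xi$ for all $\xi$ iff $B=cI$) is correct, as are the preliminary facts ($\widehat a(0)=0$, the first-order heat expansion, the lower bound by homogeneity).

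The gap is in the ``only if'' direction of the second equivalence, and it propagates to the ``Furthermore'' clause. Your own computation gives
\[
\widehat{\Theta_j}(\xi,t)=i\,e^{-t|\xi|^2}\Bigl[\bigl((B-M)\xi\bigr)_j-\xi_j\,\tfrac{\xi^\top B\xi}{|\xi|^2}\Bigr],
\]
so $\Theta\equiv0$ is equivalent to $B-M=cI$ and $\xi^\top B\xi=c|\xi|^2$, i.e.\ to $\mathrm{sym}(B)=cI$ \emph{and} $\mathrm{antisym}(B)=M$. Since $\int u_\ell u_k$ is symmetric, $\mathrm{antisym}(B)$ is the antisymmetric part of $\int_0^\infty\!\!\int_{\R^n} f\,{\rm d}y{\rm \,d}s$; contracted against $F_{\ell k,j}$ it produces a term $\sum_\ell(\pt_\ell E_t)\beta_{j\ell}$ of exactly the same self-similar form as the moment term, so it can cancel a nonzero $M$. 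Hence the condition you actually derive is strictly weaker than the conjunction ``$B=cI$ and $M=0$'' that the corollary asserts, and your closing step---``antisymmetry of $M$ together with \eqref{eq;MSconditionF} yields $M=0$ and $B=cI$''---is circular, since \eqref{eq;MSconditionF} is part of the conclusion in that direction. No non-circular repair exists without the additional hypothesis that $\int_0^\infty\!\!\int_{\R^n}f_{k\ell}\,{\rm d}y{\rm\,d}s=\int_0^\infty\!\!\int_{\R^n}f_{\ell k}\,{\rm d}y{\rm\,d}s$ for all $k,\ell$. You should either state that symmetry assumption explicitly (it is vacuous for $f\equiv0$, i.e.\ in the original Miyakawa--Schonbek setting, and it holds for the tensors $f^{(m)}$ and $f^{(\infty)}$ constructed in Section~\ref{sec:proof}, so the application to Theorem~\ref{thm;2} is unaffected), or restate the second and third assertions with the corrected characterization $\mathrm{sym}(B)=cI$ and $\mathrm{antisym}\bigl(\int_0^\infty\!\!\int_{\R^n}f\bigr)=(m_{jk})$.
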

\begin{remark}
We note that if 
\begin{equation*}
\int_0^\infty \!\!\!\int_{\R^n}
f_{k \ell}(y,s){\rm \,d}y{\rm \,d}s 
\neq
\int_0^\infty\!\!\! \int_{\R^n} 
f_{\ell k}(y,s){\rm \,d}y{\rm \,d}s \qquad\text{for some $k$ and $\ell$},
\end{equation*} 
then the condition \eqref{eq;MSconditionF} does not hold. Hence,
if the tensor $f$ is not symmetric in the above sense, one cannot expect
rapid time decay, no matter how fast $f$ decays at spatial infinity and time infinity. 
In this case, by the last assertion of the corollary, we have
\begin{equation*}
\liminf\limits_{t\to\infty}
t^{\frac{1}{2}+\frac{n}{2}(1-\frac{1}{q})}
\|u(t)\|_q >0, \qquad 1\leq q \leq \infty.
\end{equation*}

\end{remark}
Corollary \ref{cor;MS} is  the natural generalisation 
of \cite{Miyakawa Schonbek}, when an external force acts on the flow. It will be our tool to show how to control the large time decay of the flow (obtaining rapidly dissipative solutions) by forcing the fluid only during a short time interval.
We now state our main result.

\begin{theorem}
\mbox{}
\label{thm;2}
\begin{itemize}
\item[(i)]
Let $n\ge2$.
There exists $\delta=\delta(n)>0$ with the following property.
If $a \in L^n_{\sigma}(\R^n)$ satisfies 
$\int_{\R^n} (1+|x|)|a(x)|{\rm \,d}x <\infty$
 and the smallness condition
\begin{equation}
\label{small-a}
\|a\|_{n}
\leq \delta,
\tag{S}
\end{equation} 
then there exists a forcing term of divergence form 
$\nabla\cdot f$, with  compact support in space-time, $f\in C^\infty_c\bigl(\R^n\times [0,\infty)\bigr)$, such that
the unique global solution of the Navier--Stokes equations (N-S) satisfies
\begin{equation*}
\label{fast-decay-a}
\lim_{t\to\infty}
t^{\frac{1}{2}+\frac{n}{2}(1-\frac{1}{q})}
\left\|
u(t)-e^{t\Delta}a
\right\|_q=0, \qquad 1\le q\le\infty.
\end{equation*}
In particular, if the $n$ first-order moments of $a$ all vanish, then
$u$ is rapidly dissipative:
\begin{equation}
\label{fast-decay}
\lim_{t\to\infty}
t^{\frac{1}{2}+\frac{n}{2}(1-\frac{1}{q})}
\left\|
u(t)
\right\|_q=0, \qquad 1\le q\le\infty.
\end{equation}

\item[(ii)]
Moreover, the shape of the above forcing term
 can be \emph{arbitrarily prescribed}, in the following sense:
for any compactly supported scalar function $\Psi\in L^\infty_c(\R^n\times \R^+)$, such that $\int_0^\infty\!\!\!\int_{\R^n}\Psi(y,s){\rm \,d}y{\rm \,d}s\not=0$,
there exist  $R>0$ and coefficients $\lambda_{\ell k}\in\R$  such that, in Item~(i), the forcing tensor $f=(f_{\ell k})$ can be taken of the form
\[
f_{\ell k}=\lambda_{\ell k}\,\Psi(\cdot/R,t), \qquad (\ell,k=1,\ldots,n).
\]
\end{itemize}
 \end{theorem}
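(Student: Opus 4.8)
The plan is to run the iteration scheme \eqref{OURmethod}–\eqref{algo} and show it converges, the key being uniform-in-$m$ control as in \eqref{BoundC}. First I would fix the profile: given $\Psi$, set $\phi(x,t)=\lambda\,\Psi(x/R,t)$ where $R$ is a large rescaling parameter and $\lambda$ an amplitude to be chosen; a spatial rescaling by $R$ multiplies $\int\!\!\int\phi$ by $R^n$ while shrinking $\|\phi\|$ in scale-invariant norms, and this is precisely the freedom that lets us match arbitrary profiles. The recursion \eqref{algo} is designed so that, by Corollary~\ref{cor;MS}, if $c_{k\ell}^{(m-1)}$ converges to the true coefficients $c_{k\ell}=\int_0^\infty\!\!\int_{\R^n}(u_ku_\ell)$ of the limit flow, then $\int_0^\infty\!\!\int_{\R^n}f_{k\ell}=c_{k\ell}\int\!\!\int\phi$ (for $k\ne\ell$) and $=(c_{kk}-\bar c)\int\!\!\int\phi$ (for $k=\ell$), so that \eqref{eq;MSconditionF} holds with $c=-\bar c\int\!\!\int\phi$ — once we arrange $\int\!\!\int\phi=1$. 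Thus the whole theorem reduces to: (1) each $u^{(m)}$ is a global mild solution in $BC([0,\infty);L^n_\sigma)$; (2) the bound \eqref{BoundC} holds with $K$ independent of $m$; (3) $u^{(m)}\to u$ and $f^{(m)}\to f$, so the fixed point is attained.

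For (1) and (2) I would first treat $m=0$ ($f^{(0)}\equiv0$) via a refined Fujita–Kato fixed point in a space like $\{u:\sup_t t^{(1-n/q)/2}\|u(t)\|_q<\infty\}$ combined with a weighted-$L^1$ component to retain the moment structure, tracking every constant explicitly in terms of $\|a\|_n$, $\|a\|_1$, $\int|x||a|$ and $n$; this yields the quantified scale-invariant estimate in the spirit of \eqref{3scale}, hence $\int_0^\infty\|u^{(0)}(s)\|_2^2\,ds\le K_0(a)$. Then induct: assuming $|c_{k\ell}^{(m-1)}|\le K$, the force $f^{(m)}$ has a scale-invariant size $\lesssim nK\cdot\|\phi\|_{\text{(scale-inv.)}}$, which by choosing $R$ large can be made as small as we like relative to $\delta$; so the same fixed-point argument produces a \emph{global} $u^{(m)}$ with $\int_0^\infty\|u^{(m)}(s)\|_2^2\,ds\le K_0(a)+C(nK\|\phi\|)=:K_1$. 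The subtle point is closing the loop $K_1\le K$: this is where one picks $\delta$ small and $R$ large first, then $K$, so that $C(nK\|\phi(\cdot/R,\cdot)\|)\le K-K_0(a)$; since $\|\phi(\cdot/R,\cdot)\|_{\text{scale-inv}}\to0$ as $R\to\infty$ (in subcritical pieces) while staying bounded in critical ones, this is arrangeable. This uniform-$K$ bootstrap, decoupling the amplitude of the force from the lifespan, is the main obstacle.

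For (3), I would show $\{u^{(m)}\}$ is Cauchy in the iteration space: subtracting the integral equations for $u^{(m+1)}$ and $u^{(m)}$, the difference is driven by $\nabla\cdot(f^{(m+1)}-f^{(m)})$ plus the bilinear difference; the bilinear part contributes a contraction factor (small data), and $\|f^{(m+1)}-f^{(m)}\|\lesssim\sum_{k,\ell}|c_{k\ell}^{(m)}-c_{k\ell}^{(m-1)}|\,\|\phi\|$, while $|c_{k\ell}^{(m)}-c_{k\ell}^{(m-1)}|\le\int_0^\infty\|u^{(m)}-u^{(m-1)}\|_2(\|u^{(m)}\|_2+\|u^{(m-1)}\|_2)\,ds$ is itself controlled by the previous difference times $K^{1/2}$. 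Choosing $R$ large makes $\|\phi\|$ small enough that the combined map is a genuine contraction on differences, giving a limit $u$ solving \eqref{OURmethod} with $f_{k\ell}=c_{k\ell}\phi$ (resp. $(c_{kk}-\bar c)\phi$) for the limiting coefficients $c_{k\ell}=\int_0^\infty\!\!\int(u_ku_\ell)$. Then $u$ is the unique global mild solution of (N-S) with this $f\in C_c^\infty$, \eqref{eq;MSconditionF} holds by construction, and Corollary~\ref{cor;MS} yields the stated decay of $u(t)-e^{t\Delta}a$; if moreover the first moments of $a$ vanish, the same corollary gives \eqref{fast-decay}. Finally, reading off $\lambda_{\ell k}$: with $\phi=\Psi(\cdot/R,t)/\!\int\!\!\int\Psi$ we get $f_{\ell k}=\lambda_{\ell k}\Psi(\cdot/R,t)$ with $\lambda_{\ell k}=c_{\ell k}/\!\int\!\!\int\Psi$ for $\ell\ne k$ and $\lambda_{kk}=(c_{kk}-\bar c)/\!\int\!\!\int\Psi$, which is exactly Item~(ii). $\qquad\square$
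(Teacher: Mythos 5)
Your proposal follows essentially the same route as the paper: a quantified Fujita--Kato scheme for $f^{(0)}\equiv 0$ yielding scale-invariant bounds on $\int_0^\infty\|u^{(0)}\|_2^2\,{\rm d}s$, an induction in $m$ in which smallness of the rescaled profile $\phi=R^{-n}\Psi(\cdot/R,t)/\!\int\!\!\int\Psi$ keeps every estimate uniform in $m$, a contraction argument for the differences $u^{(m+1)}-u^{(m)}$ driven by $|c^{(m)}_{k\ell}-c^{(m-1)}_{k\ell}|$, and finally Corollary~\ref{cor;MS} applied to the limit. The algebra identifying $\beta_{k\ell}=c\,\delta_{k\ell}$ and the reading-off of $\lambda_{\ell k}$ (up to the missing factor $R^{-n}$ in your last formula) are as in the paper.

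There is, however, one genuine gap in your step (3). You assert that ``the bilinear part contributes a contraction factor (small data)'' for the differences, but the space in which the differences must be measured cannot be scale-invariant: to control $|c^{(m)}_{k\ell}-c^{(m-1)}_{k\ell}|\le\int_0^\infty\|u^{(m)}-u^{(m-1)}\|_2\bigl(\|u^{(m)}\|_2+\|u^{(m-1)}\|_2\bigr)\,{\rm d}s$ you need the difference in a time-weighted $L^2_x$ norm (the paper uses $\|v\|_Y=\sup_s(1+s)^{(n+1)/4}\|v(s)\|_2$). In such a subcritical norm the bilinear contraction constant is \emph{not} bounded by $\|a\|_n$ alone: the paper's Lemma~\ref{lem-sa} shows it is of size $\|a\|_n^{1/n}\bigl(J(a)^{1-1/n}+\|a\|_2^{1-1/n}\bigr)$, which involves $\|a\|_1$, $\|a\|_2$ and $\int|x||a|$ and can be arbitrarily large even when $\|a\|_n\le\delta$. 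Note also that the rescaling parameter $R$ only shrinks $\phi$, not this bilinear constant, so it cannot rescue the contraction. The paper resolves this by first proving the theorem under an auxiliary smallness hypothesis (S$'$) on that subcritical quantity, and then removing it by replacing $a$ with $a_\lambda=\lambda a(\lambda\cdot)$ for large $\lambda$ (which leaves $\|a\|_n$ unchanged but makes (S$'$) hold) and scaling the resulting force and solution back. Your outline needs this extra step, or some substitute for it, to prove the statement as claimed under the sole hypothesis $\|a\|_n\le\delta$.
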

\begin{remark}
From the second assertion, it follows that one can force the flow to have a fast  decay in large time,
by acting with an external force in a bounded region, during a time interval that can be taken arbitrarily short.
\end{remark}

Let us discuss the case of an initial data with first order vanishing moments, and let us apply
a forcing term to the flow, as described by our theorem. We thus get a rapidly dissipative flow. Moreover,
in Theorem~\ref{thm;2}, since~$f$ identically vanishes after a finite time interval, 
the solution eventually behaves like a non-forced Navier--Stokes flow.
More precisely, after some time $t>t_0$, 
the flow is governed by
\begin{equation*}
u(t)=e^{(t-t_0)\Delta} u(t_0) -
\int_{t_0}^t e^{(t-s)\Delta}
\PP\nabla\cdot(u\otimes u)(s)\text{\,d}s, \qquad t>t_0.  
\end{equation*}
This does not mean that the effect of the force disappears for $t>t_0$, as the new initial data $u(t_0)$ does depend on the past action of $f$ during the interval $[0,t_0]$.

So, due to the result of \cite{Miyakawa Schonbek}, we have  two possibilities.
\begin{itemize}
\item[i)]
The first possible scenario is that the moment condition for $u(t)$ breaks down
at some times $t_1\ge t_0$, i.e.,
$\int_{\R^n} (1+|x|)|u(x,t_1)|\text{\,d}x=\infty$.
At those time instants~$t_1$, 
the linear part $t\mapsto e^{t\Delta}u(t_1)$ and the nonlinear part
$t\mapsto \int_{t_1}^t  e^{(t-s)\Delta}
\PP\nabla\cdot (u\otimes u)\text{\,d}s$, individually, may both decay slowly, but they 
feature a nontrivial interaction 
annihilating the slowly decay terms, thus leading to a fast decay of solution of the free Navier--Stokes equations starting from $u(t_1)$.
\item[ii)]
The second possible scenario is that the finiteness of first order moment
of $u(t)$ is preserved, i.e., $\int_{\R^n} (1+|x|)|u(x,t)|{\rm \,d}x<\infty$ \emph{for all} $t\geq t_0$.
In this case, a much stronger condition than~\eqref{MSconditionN} must be true,
namely:
\begin{equation*}
\int_{\R^n} (u_\ell u_k)(x,t){\rm \,d}x =0 \quad 
\text{for } k\neq \ell,
\quad\text{and}\quad 
\int_{\R^n} u_1(x,t)^2
{\rm \,d}x=\ldots
=\int_{\R^n}u_n(x,t)^2{\rm \,d}x,
\quad \text{for all $t\geq t_0$}.
\end{equation*}
(See \cite{Brandolese 2001}).
This second scenario looks non-generic as it contrasts with the general spatial spreading phenomenon of the velocity field: the only known examples of flows satisfying the above orthogonality relations 
 are those
constructed putting symmetries as in~\cite{Brandolese 2001, Brandolese 2004,Miyakawa 2002 Hiroshima}.

\end{itemize}

%
%
%
%
%
%

Let us stress the fact that, if one allows non-compactly supported external forces (not necessarily of divergence form), then
it would be a trivial task to achieve the goal of forcing the flow to be rapidly dissipative, as in~\eqref{fast-decay} (or even to bring it to rest in finite time). Indeed one could just first define any divergence-free vector field $v(x,t)$ which is equal to $a(x)$ as $t=0$ and equal to $0$ for $t\ge t_0$, next define the force
to be the residual of the Navier--Stokes operator.
But of course,  such a force  would be spread out in the whole space because of the nonlocal nature of the pressure.

{

\section{Notations and preliminary estimates} 

Let us introduce some notations and function spaces.
Let $C_c^\infty(\Omega)$ denote the set of 
all $C^\infty$-functions (or vectors) with compact support in a connected set $\Omega$.
Let $C_{c,\sigma}^\infty(\mathbb{R}^n)$ denote 
the set of all $C^\infty$-solenoidal vectors $\varphi$ with compact support
in $\mathbb{R}^n$, i.e., $\Div \varphi=0$ in $\mathbb{R}^n$.
The space $L^r_{\sigma}(\mathbb{R}^n)$ is the closure of 
$C^\infty_{c,\sigma}(\mathbb{R}^n)$ with respect to the $L^r$-norm 
$\|\cdot \|_r$, $1< r < \infty$;
$L^r(\mathbb{R}^n)$  denote 
the usual (vector-valued) Lebesgue space  over 
$\mathbb{R}^n$. 
Moreover, $C(I;X)$, $BC(I;X)$ and $L^r(I;X)$ denote 
the $X$-valued continuous and bounded continuous functions over the interval $I\subset \R$, 
and $X$-valued $L^r$-functions, respectively.

In the estimates of this paper, we will mainly compare functions of the time variable.
When we write
\[
A(t) \lesssim B(t)
\]
we mean that there exists a constant $c>0$, \emph{only depending on the space dimension}~$n$, such that $A(t)\le c B(t)$ for all $t$. 
In particular, when the functions $A$ and $B$ depend on other parameters (such as the initial data $a$,
the recursive parameter $m$, etc.), the constant~$c$ will be independent on these parameters.

When the constants in our estimates depend on parameters other than the space dimension
(as in~\eqref{eq;lplq} below), we will indicate this fact explicitly in our notations.

We start recalling some well-known $L^p$-$L^q$ estimates, 
that play an important role through this paper.
\begin{proposition}\label{prop;LpLq}
Let $1\leq p \leq q \leq \infty$. 
Then there exists a constant $C_{q,p}>0$ such that
\begin{alignat}{2} \label{eq;lplq}
\|e^{t\Delta}a\|_q 
&\leq C_{q,p}t^{-\frac{n}{2}(\frac{1}{p}-\frac{1}{q})} \|a\|_p, 
\qquad t>0,\\
\label{eq;nlplq}
\|\nabla e^{t\Delta} a\|_q 
&\leq C_{q,p} t^{-\frac{n}{2}(\frac{1}{p}-\frac{1}{q})-\frac{1}{2}}\|a\|_p,
\qquad t>0
\end{alignat}
for a function, velocity vector or tensor $a\in L^p(\R^n)$.
\end{proposition}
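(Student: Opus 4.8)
The plan is to realise the heat semigroup as convolution against the Gauss--Weierstrass kernel and then combine Young's convolution inequality with a scaling argument; this reduces everything to computing two explicit Gaussian integrals.

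First I would recall that $e^{t\Delta}a=E_t*a$, where $E_t(x)=(4\pi t)^{-n/2}e^{-|x|^2/(4t)}$, and record the scaling identity $E_t(x)=t^{-n/2}E_1(x/\sqrt{t})$. The substitution $y=x/\sqrt{t}$ then gives, for every $1\le r\le\infty$,
\[
\|E_t\|_r=t^{-n/2}\,t^{\frac{n}{2r}}\|E_1\|_r=t^{-\frac{n}{2}\left(1-\frac1r\right)}\|E_1\|_r,
\]
and $\|E_1\|_r<\infty$ since a Gaussian lies in every $L^r(\R^n)$. Given $1\le p\le q\le\infty$, pick $r\in[1,\infty]$ via the Young relation $1+\frac1q=\frac1r+\frac1p$, equivalently $1-\frac1r=\frac1p-\frac1q$; one should note in passing that $p\le q$ forces $r\in[1,\infty]$, so the choice is legitimate. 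Young's inequality yields
\[
\|e^{t\Delta}a\|_q=\|E_t*a\|_q\le\|E_t\|_r\|a\|_p=\|E_1\|_r\,t^{-\frac{n}{2}\left(\frac1p-\frac1q\right)}\|a\|_p,
\]
which is \eqref{eq;lplq} with $C_{q,p}:=\|E_1\|_r$.

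For \eqref{eq;nlplq} I would repeat the argument verbatim with $\nabla E_t$ in place of $E_t$. Since $\nabla E_t(x)=-\tfrac{x}{2t}E_t(x)$, differentiating the scaling identity gives $\nabla E_t(x)=t^{-(n+1)/2}(\nabla E_1)(x/\sqrt{t})$, whence
\[
\|\nabla E_t\|_r=t^{-\frac12-\frac{n}{2}\left(1-\frac1r\right)}\|\nabla E_1\|_r=t^{-\frac12-\frac{n}{2}\left(\frac1p-\frac1q\right)}\|\nabla E_1\|_r,
\]
with $\|\nabla E_1\|_r<\infty$ because $x\mapsto|x|e^{-|x|^2/4}\in L^r(\R^n)$ for all $r$. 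Young's inequality applied to $\nabla e^{t\Delta}a=(\nabla E_t)*a$ then delivers \eqref{eq;nlplq}. When $a$ is vector- or tensor-valued the estimates follow by applying the scalar case componentwise.

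There is no real obstacle in this proof; the only points that deserve a moment's attention are the exponent bookkeeping — verifying that $\tfrac1r=1-\tfrac1p+\tfrac1q$ indeed lies in $[0,1]$ for $p\le q$ and that $1-\tfrac1r=\tfrac1p-\tfrac1q$ — and the observation that Young's convolution inequality is valid up to the endpoint exponents $1$ and $\infty$, so that the boundary cases $p=1$, $q=\infty$, and $p=q$ (where $r=1$) require no separate treatment.
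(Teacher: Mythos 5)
Your argument is exactly the one the paper invokes: it states that \eqref{eq;lplq} and \eqref{eq;nlplq} "are immediately derived from the Young inequality for the heat kernel and $a$," and your write-up simply makes that explicit via the scaling of $E_t$ and $\nabla E_t$ together with Young's convolution inequality. The proof is correct and takes essentially the same route as the paper.
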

The proof of \eqref{eq;lplq} and \eqref{eq;nlplq} 
are immediately derived from the Young inequality 
for the heat kernel and $a$. For the Stokes semigroup on $L^p_\sigma(\R^n)$, we exclude the case $(p,q)=(\infty,\infty)$ and $(p,q)=(1,1)$. 
See, for instance, \cite{Kato}, \cite[Corollary 1.1]{Tsutsui JFA}.

We also recall a well known variant of the previous estimates, that we will use often for the $L^2$-norm:
\begin{lemma}
\label{lem:heat2}
If $a\in L^1(\R^n)$ with $\int_{\R^n}|x|\,|a(x)|{\rm \,d}x<\infty$ and $\Div a=0$,
then
\[
\|e^{t\Delta}a\|_2\lesssim\textstyle\Bigl(\int_{\R^n}|x||a(x)|{\rm\,d}x\Bigr)t^{-(n+2)/4}.
\] 
\end{lemma}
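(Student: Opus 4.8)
The plan is to exploit the divergence-free condition to gain one extra power of decay over the naive estimate $\|e^{t\Delta}a\|_2 \lesssim t^{-n/4}\|a\|_1$. The key algebraic fact is that $\widehat{a}(0) = \int_{\R^n} a(x)\,{\rm d}x$ and, more importantly, that $\xi\cdot\widehat{a}(\xi) = 0$ for all $\xi$ since $\Div a = 0$; combined with the fact that $\int_{\R^n}|x|\,|a(x)|\,{\rm d}x < \infty$ guarantees $\widehat{a}$ is Lipschitz, this lets us bound $|\widehat{a}(\xi)|$ by a quantity linear in $|\xi|$ near the origin. First I would write, via Plancherel,
\[
\|e^{t\Delta}a\|_2^2 = \int_{\R^n} e^{-2t|\xi|^2}|\widehat{a}(\xi)|^2\,{\rm d}\xi,
\]
and then estimate $|\widehat{a}(\xi)|$. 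The point is that $\widehat{a}(0)=0$: indeed, for each component, the divergence-free condition reads $\sum_k \xi_k \widehat{a_k}(\xi) = 0$, and differentiating (or using that $\widehat{a}$ is $C^1$ with $\partial_j \widehat{a}$ bounded) one sees $\widehat{a}(0)$ must be annihilated — more directly, one integrates $\Div a = 0$ against $x_j$ to get $\int a_j\,{\rm d}x = 0$, hence $\widehat{a}(0)=0$. Since $\nabla\widehat{a}$ is bounded in sup-norm by $\int |x|\,|a(x)|\,{\rm d}x =: M$, the mean value inequality gives $|\widehat{a}(\xi)| \le M|\xi|$ for all $\xi$.

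Next I would plug this bound into the Plancherel identity:
\[
\|e^{t\Delta}a\|_2^2 \le M^2 \int_{\R^n} e^{-2t|\xi|^2}|\xi|^2\,{\rm d}\xi.
\]
The remaining integral is computed by the substitution $\eta = \sqrt{t}\,\xi$: it equals $t^{-(n+2)/2}\int_{\R^n} e^{-2|\eta|^2}|\eta|^2\,{\rm d}\eta = c_n\, t^{-(n+2)/2}$, where $c_n$ depends only on $n$. Taking square roots yields $\|e^{t\Delta}a\|_2 \lesssim M\, t^{-(n+2)/4}$, which is exactly the claim.

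I do not expect any serious obstacle here; the lemma is essentially a one-line Fourier-side computation. The only point requiring a modicum of care is justifying that $\widehat{a}(0)=0$ and that $\widehat{a}$ is genuinely Lipschitz with constant $M$ — both follow cleanly from the hypotheses $a\in L^1$, $\int|x|\,|a|<\infty$, and $\Div a = 0$ (the first two give $\widehat{a}\in C^1$ with $\|\nabla\widehat{a}\|_\infty\le M$; the third, tested against linear functions, gives the vanishing mean of each component). One could alternatively avoid the Fourier transform entirely and argue in physical space by writing $e^{t\Delta}a(x) = \int (E_t(x-y)-E_t(x))a(y)\,{\rm d}y$ using $\int a = 0$, then estimating $|E_t(x-y)-E_t(x)| \le |y|\sup|\nabla E_t|$ and using the bound $\|\nabla E_t\|_{L^1_x}\lesssim t^{-1/2}$ together with $\|E_t\|_2 \lesssim t^{-n/4}$ via a more refined splitting; but the Fourier argument is the shortest route.
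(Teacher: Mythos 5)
Your Fourier--side argument is correct, and it is a genuinely different route from the one the paper takes. The paper argues entirely in physical space: using $\int a=0$ (which, as you note, follows from $a\in L^1$ and $\Div a=0$), it writes
$e^{t\Delta}a(x)=\int_{\R^n}\bigl(E_t(x-y)-E_t(x)\bigr)a(y)\,{\rm d}y
=-\int_{\R^n}\!\int_0^1\nabla E_t(x-\theta y)\cdot y\,a(y)\,{\rm d}\theta\,{\rm d}y$,
applies Minkowski's integral inequality to get $\|e^{t\Delta}a\|_2\le\|\nabla E_t\|_2\int|y|\,|a(y)|\,{\rm d}y$, and concludes from the scaling identity $\|\nabla E_t\|_2=t^{-(n+2)/4}\|\nabla E_1\|_2$. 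Your proof instead passes to the Fourier side: $\hat a(0)=0$ together with the Lipschitz bound $\|\nabla\hat a\|_\infty\lesssim\int|x|\,|a(x)|\,{\rm d}x$ gives $|\hat a(\xi)|\lesssim M|\xi|$, and Plancherel plus a Gaussian integral finishes. Both are essentially one-line computations of comparable difficulty, and both use the divergence-free hypothesis only through the vanishing of the mean. The physical-space version has the advantage of generalising verbatim to every $L^p$ norm, $1\le p\le\infty$ (replace $\|\nabla E_t\|_2$ by $\|\nabla E_t\|_p$), which is in the spirit of the $L^p$--$L^q$ estimates used throughout the paper; your Plancherel version is tied to $p=2$ but makes the mechanism of the gain (the first-order zero of $\hat a$ at the origin) more transparent. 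One minor remark: your sketched physical-space alternative at the end, via $\sup|\nabla E_t|$ and a splitting, is clumsier than necessary --- the clean way is exactly the Minkowski-inequality argument above, which is what the paper does.
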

For the proof, see \cite{DuoZ92}. We give a proof below for reader's convenience:
\begin{proof}
Since $a\in L^1(\R^n)$ and $\Div a=0$, we note that $\int_{\R^n} a(y){\rm\,d}y=0$. 
Hence, we easily obtain
\begin{equation*}
\begin{split}
e^{t\Delta} a (x)
&=\int_{\R^n}E_t(x-y)a(y){\rm \, d}y
=\int_{\R^n} \bigl(E_t(x-y)-E_t(x)\bigr)a(y) {\rm \,d}y
\\
&=-\int_{\R^n}\!\!\int_0^1 \nabla E_t (x-\theta y)
\cdot ya(y){\rm \, d}\theta{\rm\, d}y.
\end{split}
\end{equation*}
Then the Minkovski inequality (for integrals) implies that
\begin{equation*}
\|e^{t\Delta}a\|_2 \leq
\int_{\R^n}\int_0^1 |y|\,|a(y)|
\left[\int_{\R^n} |\nabla E_t(x-\theta y)|^2{\rm\,d}x\right]^{1/2}{\rm \, d}\theta{\rm \,d}y
=\|\nabla E_t\|_2\int_{\R^n}|y| \,|a(y)| {\rm \,d}y.
\end{equation*} 
Since $\|\nabla E_t\|_2 = t^{-\frac{n+2}{4}}\|\nabla E_1\|_2$, the proof is completed.
\end{proof}

For $1<r<\infty$, the Leray projection $\PP:L^r(\R^n)
\to L^r_{\sigma}(\R^n)$ satisfies 
$\|\PP u\|_r\le A_r\|u\|_r$ for all $u \in L^r(\R^n)$
with some constant $A_r>0$.

We consider the integral formulation of (N-S), that can be written
in the abstract form
\begin{equation}
 \label{NS-ab}
 u(t)=e^{t\Delta}a+\int_0^te^{(t-s)\Delta}\PP\nabla\cdot f(s){\rm\,d}s+G(u,u)(s),
 \qquad\nabla\cdot a=0,
\end{equation}
where
\begin{equation}
\label{Gop}
 G(u,v)
=-\int_0^t e^{(t-s)\Delta}\PP \nabla\cdot(u\otimes v)(s)\text{\,d}s.
\end{equation}


We will systematically glue together the heat kernel, the Leray projector and the divergence
operator, obtaining in this way the convolution operator $e^{t\Delta}\mathbb{P}\text{div}$.
The kernel of this operator is denoted by $F$. Its components are given by
$F_{\ell k, j}(x,t)=
\pt_\ell E_t(x)\delta_{j k} + \int_t^\infty
\pt_\ell \pt_k \pt_j E_s(x)\text{\,d}s$,
for $\ell,k,j=1,\ldots,n$.
Such kernel satisfies $F\in C^\infty(\R^n\times(0,\infty))$ and the scaling relations $F(x,t)=t^{-(n+1)/2}F(x/\sqrt t,1)$. Moreover, for all $t>0$,
\begin{equation}
\label{est:F}
 \|F(\cdot,t)\|_p= c_p\,t^{-(n+1)/2+n/(2p)}, \qquad1\le p\le\infty,
\end{equation}
for some constant $c_p>0$ depending only on~$n$ and $p$. See~\cite{Fujigaki Miyakawa SIAM}.

\section{Proof of Theorem \ref{thm;2}}
\label{sec:proof}

\subsection{The case of a vanishing external force}
\label{sub:0}

\paragraph{Construction of the solution of the free Navier--Stokes equations.}

In this paragraph we quickly present a sightly simplified version of Kato's method for the construction of mild solutions in $L^n(\R^n)$.
Let us make use of Kato's space, defined for $n\le p\le\infty$ by,
\[
X_p=\bigl\{v\in L^\infty_{\rm loc}\bigl(\R^+;L^p(\R^n)\bigr)
\colon \|v\|_{X_p}=
\esup\limits_{t>0}\,t^{\frac12-\frac{n}{2p}}\|v(t)\|_p<\infty\bigr\}.
\]
From now on we will abusively write $\sup_{t>0}$ instead of $\text{ess\,sup}_{t>0}$ to simplify our notations.
Notice that $X_n=L^\infty(\R^+;L^n(\R^n))$.
By the standard heat-kernel estimate \eqref{eq;lplq},
\begin{equation}
\label{est:line}
\|e^{t\Delta}a\|_{X_p}\le C_{p,n}\|a\|_{n}\qquad n\le p\le\infty.
\end{equation}
It easily follows from~\eqref{est:F} and the usual H\"older and Young estimates that
\begin{equation}
 \label{kato-est}
 \|G(u,v)\|_{X_r}\le \kappa_{r,p}\|u\|_{X_p}\|v\|_{X_p},\qquad
 \textstyle
 \frac1r\le \frac2p<\frac1n+\frac1r, \quad n<p<\infty,\quad n\le r\le\infty,
\end{equation}
for some constant $\kappa_{r,p}$ depending only on $p,r$ and $n$.
In particular, choosing, e.g., $p=r=2n$ in~\eqref{kato-est},
\begin{equation}
 \label{kato-est-2n}
 \|G(u,v)\|_{X_{2n}}\le \kappa_{2n,2n}\|u\|_{X_{2n}}\|v\|_{X_{2n}}.
\end{equation}
On the other hand,
by the usual heat-kernel estimate
\begin{equation*}
 \|e^{t\Delta}a\|_{X_{2n}}\le C_{2n,n}\|a\|_{n}.
\end{equation*}

In this subsection we just consider (N-S) in the case $f\equiv0$. 
In this case, 
by an appropriate choice of $\delta>0$,
namely, choosing
\begin{equation}
 \tag{D1} 0<\delta<1/(4C_{2n,n}\kappa_{2n,2n}),
\end{equation}}
 from the smallness assumption~\eqref{small-a}
we can ensure that
\begin{equation}
 \label{small1}
 \|a\|_{n}<1/(4C_{2n,n}\kappa_{2n,2n}).
\end{equation}
So the usual fixed point Lemma, applied to the equation 
$u=e^{t\Delta} a+G(u,u)$ in the space $X_{2n}$, implies that
a mild solution $u\in X_{2n}$ to (N-S) (with identically zero external force) does exist.
Moreover,
\begin{equation*}
 \|u\|_{X_{2n}}\le 2C_{2n,n}\|a\|_n \lesssim\|a\|_n
\end{equation*}
and this condition uniquely defines~$u$.
Such solution is obtained as the limit, in the $X_{2n}$-norm, of the sequence of approximate
solutions
\begin{equation}
\label{iterkato}
u_{k+1}=e^{t\Delta}a+G(u_k,u_k),\qquad
k\in\N,
\qquad
\text{with \; }
u_0=e^{t\Delta}a.
\end{equation}
Here are some further estimates on $u$, that directly follow from
the equation
\[
u=e^{t\Delta}a+G(u,u)
\]
and the application of~\eqref{est:line} and \eqref{kato-est} with different choice of the parameters
(we also use~\eqref{small1} for the right inequalities below):
\begin{equation}
\label{argu}
\begin{split}
\|u\|_{X_n} &\le C_{n,n}\|a\|_n+\kappa_{n,2n}\|u\|_{X_{2n}}^2\lesssim\|a\|_n,\\
\|u\|_{X_{3n}} &\le C_{3n,n}\|a\|_n+\kappa_{3n,2n}\|u\|_{X_{2n}}^2 \lesssim\|a\|_n,\\
\|u\|_{X_\infty} &\le C_{\infty,n}\|a\|_n
+\kappa_{\infty,3n}\|u\|_{X_{3n}}^2\lesssim\|a\|_n.
\end{split}
\end{equation}
In particular, as all previous norms are finite,
\[
u\in X_n\cap X_\infty.
\]

\paragraph{First quantified $L^2$-decay estimates.}
The goal of this and next paragraph is to provide some quantified versions of $L^2$-decay rate estimates.  At the end of next paragraph we will be able to quantify Wiegner's fundamental estimate~\eqref{eq;optimaldecay}. More precisely, our goal will be to make explicit how
the constant $C$, in the right-hand side of the optimal decay result 
\begin{equation*}
\|u(t)\|_2\le C(1+t)^{-(n+2)/4},
\end{equation*}
depends on~$a$.
In fact, we will do more than this.
A drawback of the above estimate is that it is not scale-invariant under the usual scaling
$a\mapsto\lambda a(\lambda\cdot)$ and $u\mapsto \lambda u(\lambda\cdot,\lambda^2\cdot)$.
A better way of estimating the $L^2$-norm, respecting the natural scaling of the Navier--Stokes
 equations, would be to look for an estimate of the form
\begin{equation}
 \|u(t)\|_2\le \|a\|_2\wedge K(a)t^{-(n+2)/4}
\end{equation}
(the wedge symbol stands for the minimum), for some functional $K=K(a)$, independent on~$t$, and
 satisfying the scaling relations
\begin{equation}
\label{scaK}
K(a)=\lambda^{n}K\bigl(\lambda\,a(\lambda\cdot)\bigr), \qquad\text{for all $\lambda>0$}.
\end{equation}
We will achieve this at the end of next paragraph, by making explicit the expression of $K(a)$.

We start with estimating the
$L^2$-norm of the approximate solutions $u_k$ defined in~\eqref{iterkato}. 
We have $\|u_0(t)\|_2\le \|a\|_2$. Next,
applying~\eqref{est:F} with $p=1$, and using that $a\in L^1(\R^n)\cap L^n(\R^n)\subset L^2(\R^n)$,
 we get, for $k\in\N$,
\[
\begin{split}
 \|u_{k+1}(t)\|_2
 &\le
\|a\|_{2}
+c_1\int_0^{t}(t-s)^{-1/2}\|u_k(s)\|_2\|u_k(s)\|_\infty{\rm \,d}s\\
&\le 
\|a\|_2+\pi c_1\|u_k\|_{X_\infty}\sup_{s>0}\|u_k(s)\|_2.\\
\end{split}
\]
But the approximate solutions $u_k$ satisfy the same estimates as~\eqref{argu}, uniformly with respect to~$k\in\N$. In particular,
$\|u_k\|_{X_\infty}\lesssim\|a\|_n$. Hence, 
by our smallness condition~\eqref{small-a} and appropriate choice of $\delta>0$\footnote
{This requires to replace our previous condition (D1) on $\delta$ by a more restrictive condition of the form
\begin{equation}
 \tag{D2} 0<\delta<\delta_2,
\end{equation}
where $\delta_2$ is not made explicit for sake of conciseness. In fact, in the subsequent steps of the proof, we will need to further strengthen condition (D2) by more restrictive ones:
 \begin{align}
&\tag{D3} 0<\delta<\delta_3 \qquad(\text{cf.~\eqref{hereD3} below}),\\
&\quad\ldots\nonumber\\
&\tag{D9} 0<\delta<\delta_9  \qquad(\text{cf. the end of the proof of Lemma~\ref{lem-sa}}).
\end{align}
All these constants $\delta_2,\ldots, \delta_9$ just depend on the space dimension.
},
we can ensure that
\[
\pi c_1\|u_k\|_{X_\infty}\le1/2<1
\]
and, by iteration, that 
$\sup\limits_{t>0} \|u_k(t)\|_2
\le 2\|a\|_2$ for all $k\in\N$. So, by Fatou's Lemma,
\begin{equation}
\label{L2bo}
\sup_{t>0}\|u(t)\|_2\le 2\|a\|_2.
\end{equation}
(Notice that this argument does not require the use of the energy inequality. One could remove the
factor~$2$ by using the energy inequality. In the sequel, throughout the paper, we will avoid the use of the energy inequality  to make evidence that Theorem~\ref{thm;2} remains true for simpler toy models
that share the same scaling as Navier--Stokes.)

Let us go further with $L^2$-decay estimates. 
To start with,
 we begin by obtaining the quantified version of the  (non-optimal) decay 
$\|u(t)\|_2=O(t^{-3/8})$.
We have, by interpolation, $a\in L^{4n/(3+2n)}(\R^n)\subset L^1(\R^n)\cap L^2(\R^n)$. Moreover,
$\|u_0(t)\|_2\lesssim\|a\|_{4n/(3+2n)}t^{-3/8}$. For $k\in \N$ we have,
\begin{equation}
\label{decl2a}
\begin{split}
 \|u_{k+1}(t)\|_2
 &\le
\|u_0(t)\|_{2}
+c_1\int_0^{t}(t-s)^{-1/2}\|u_k(s)\|_2\|u_k(s)\|_\infty{\rm \,d}s\\
&\lesssim
\|u_0(t)\|_{2}
+c_1\int_0^{t}(t-s)^{-1/2}s^{-7/8}{\rm \,d}s\;
\|u_k\|_{X_\infty}\,\sup_{s>0}s^{3/8}\|u_k(s)\|_2
\\
&\lesssim
t^{-3/8}\Bigl(\|a\|_{4n/(3+2n)}+\|a\|_n\,\sup_{s>0}s^{3/8}\|u_k(s)\|_2\Bigr).
\end{split}
\end{equation}
With an appropriate choice of~$\delta>0$, (D3), by~\eqref{small-a}, iterating the above estimate
we obtain 
\begin{equation}
\label{hereD3}
\sup\limits_{k\in\N,t>0} t^{3/8}\|u_k(t)\|_2\lesssim \|a\|_{4n/(3+2n)}.
\end{equation}
 Hence,
\begin{equation}
 \label{est3/8}
 \begin{split}
 \sup_{t>0} \,t^{3/8}\|u(t)\|_2
 &\lesssim \|a\|_{4n/(3+2n)}.\\
 \end{split}
\end{equation}

Next we establish the quantified version of the improved, but still non-optimal, 
decay $\|u(t)\|_2=O(t^{-(n+1)/4})$.
Combining the standard estimate $\|e^{t\Delta}a\|_2\lesssim t^{-n/4}\|a\|_1$ with Lemma~\ref{lem:heat2},
we get the estimate
\begin{equation}
\label{hean1}
\sup_{t>0} \,
t^{(n+1)/4}\|e^{t\Delta}a\|_2
\lesssim  \|a\|_1^{1/2}\Bigl(\textstyle\int_{\R^n}|x|\,|a(x)|{\rm \,d}x\Bigr)^{1/2}.
\end{equation}
On the other hand, applying~\eqref{est:F}, \eqref{est3/8}, \eqref{hean1}, and that 
$\|u_k\|_{X_\infty}\lesssim \|a\|_n$ (see~\eqref{argu}), we get
\begin{equation}
\label{decl3}
\begin{split}
 \|u_{k+1}(t)\|_2
 &\le
\|u_0(t)\|_{2}
+c_2\int_0^{t/2}(t-s)^{-(n+2)/4}\|u_k(s)\|_2^2{\rm \,d}s
+c_1\int_{t/2}^{t}(t-s)^{-1/2}\|u_k(s)\|_2\|u_k(s)\|_\infty{\rm \,d}s\\
&\lesssim
\|u_0(t)\|_{2}
+ \|a\|_{4n/(3+2n)}^2\int_0^{t/2}(t-s)^{-(n+2)/4}s^{-3/4}{\rm \,d}s
+\|u_k\|_{X_\infty}\int_{t/2}^{t}(t-s)^{-1/2}s^{-1/2}\|u_k(s)\|_2{\rm \,d}s\\
&\lesssim
t^{-(n+1)/4}
\biggl(
\|a\|_1^{1/2}\Bigl(\textstyle\int_{\R^n}|x|\,|a(x)|{\rm \,d}x\Bigr)^{1/2}
+\|a\|_{4n/(3+2n)}^2
+\|a\|_n
\sup\limits_{s>0}s^{(n+1)/4}\|u_{k}(s)\|_2
\biggr).
\end{split}
\end{equation}
With an appropriate choice of~$\delta>0$, (D4), by~\eqref{small-a}, iterating the above estimate and taking $k\to\infty$ we obtain
\begin{equation}
\label{Jaa}
 \begin{split}
 \sup_{t>0}t^{(n+1)/4}\|u(t)\|_2
 &\lesssim
\|a\|_1^{1/2}\Bigl(\textstyle\int_{\R^n}|x|\,|a(x)|{\rm \,d}x\Bigr)^{1/2}
+\|a\|_{4n/(3+2n)}^2
=:J(a).
 \end{split}
\end{equation}

Of course we also have, because of~\eqref{L2bo}, 
\begin{equation}
 \label{small:tec}
 \begin{split}
 \|u(t)\|_2\lesssim \|a\|_2\wedge J(a)t^{-(n+1)/4},
 \end{split}
\end{equation}
where
the notation $\alpha \wedge\beta$ stands for $\min\{\alpha,\beta\}$.
The two terms defining $J(a)$ rescale in the same way: 
if $\lambda>0$ and $a_\lambda=\lambda\,a(\lambda\cdot)$, then we see that
$\lambda^{n-1/2}J(a_\lambda)=J(a)$. This makes~\eqref{small:tec} a scale invariant estimate.

Estimate~\eqref{small:tec} implies that $u\in L^2\bigl(\R^+;L^2(\R^n)\bigr)$.
More precisely, for any $\tau>0$,
\begin{equation*}
\begin{split}
\int_0^\infty \|u(s)\|_2^2\text{\,d}s 
&\lesssim
\int_0^{\infty} \bigl(\|a\|_2^2\wedge J(a)^2s^{-(n+1)/2} \bigr){\rm \,d}s\\
 &\le\int_0^{\tau}\|a\|_2^2{\rm \,d}s +\int_\tau^{\infty} J(a)^2s^{-(n+1)/2} {\rm \,d}s\\
 &\lesssim \tau\|a\|_2^2+J(a)^2\tau^{-(n-1)/2}.
\end{split}
\end{equation*}
Equalising the two last terms in the right-hand side leads to the choice $\tau=(J(a)/\|a\|_2)^{4/(n+1)}$.
Then we get the quantified scale-invariant $L^2\bigl(\R^+;L^2(\R^n)\bigr)$ estimate
\begin{equation}
\label{lateru}
\int_0^\infty \|u(s)\|_2^2\text{\,d}s 
 \lesssim J(a)^{4/(n+1)}\|a\|_2^{2(n-1)/(n+1)}.
\end{equation}

\paragraph{The optimal, scale invariant and quantified $L^2$-decay estimate.}
We are now in the position of obtaining the quantified version of the 
optimal decay $\|u(t)\|_2=O(t^{-(n+2)/4}$).
Indeed, applying Lemma~\ref{lem:heat2} we get
$\|u_0(t)\|_2\lesssim t^{-(n+2)/4}\int_{\R^n}|x|\,|a(x)|{\rm\,d}x$.
Moreover, by \eqref{small:tec}, \eqref{lateru}, that by construction hold true for the approximate
solutions $u_k$,
uniformly with respect to~$k\in\N$,
we obtain,
\begin{equation}
\label{decl2}
\begin{split}
 \|u_{k+1}(t)\|_2
 &\le
\|u_0(t)\|_{2}
+c_2\int_0^{t/2}(t-s)^{-(n+2)/4}\|u_k(s)\|_2^2{\rm \,d}s
+c_1\int_{t/2}^{t}(t-s)^{-1/2}\|u_k(s)\|_2\|u_k(s)\|_\infty{\rm \,d}s\\
&\lesssim
\|u_0(t)\|_{2}
+ t^{-(n+2)/4}\int_0^\infty \|u_k(s)\|_2^2\text{\,d}s
+\|u_k\|_{X_\infty}\int_{t/2}^{t}(t-s)^{-1/2}s^{-1/2}\|u_k(s)\|_2{\rm \,d}s\\
&\lesssim
t^{-(n+2)/4}\Bigl(\textstyle\int_{\R^n}|x|\,|a(x)|{\rm\,d}x
+ J(a)^{4/(n+1)}\|a\|_2^{2(n-1)/(n+1)}
+\|a\|_n\sup\limits_{s>0}s^{(n+2)/4}\|u_{k}(s)\|_2\Bigr).
\end{split}
\end{equation}
Once more, provided we make an appropriate choice of~$\delta>0$, from the smallness condition~\eqref{small-a} we obtain, by iteration,
\[
\sup_{t>0}t^{(n+2)/4}\|u(t)\|_2\lesssim
\textstyle\int_{\R^n}|x|\,|a(x)|{\rm\,d}x
+ J(a)^{4/(n+1)}\|a\|_2^{2(n-1)/(n+1)}=:K(a).
\]
This estimate, combined with~\eqref{L2bo} gives the quantified, scale-invariant, optimal estimate
\begin{equation}
 \label{opt-dec}
 \|u(t)\|_2\lesssim \|a\|_2\wedge  K(a)t^{-(n+2)/4},
\end{equation}
where
\begin{equation}
 \label{Ja}
\begin{split}
K(a)
&=
\textstyle\int_{\R^n}|x|\,|a(x)|{\rm\,d}x
+ J(a)^{4/(n+1)}\|a\|_2^{2(n-1)/(n+1)}\\
&\simeq
\textstyle\int_{\R^n}|x|\,|a(x)|{\rm\,d}x+
\|a\|_1^{2/(n+1)}\|a\|_2^{2(n-1)/(n+1)}\Bigl(\textstyle\int_{\R^n}|x|\,|a(x)|{\rm \,d}x\Bigr)^{2/(n+1)}
+\|a\|_{4n/(3+2n)}^{8/(n+1)}\|a\|_2^{2(n-1)/(n+1)}
\end{split}
\end{equation}
Observe that the functional $K$ satisfies the scaling property~\eqref{scaK}.
In 2D,  $K(a)$ can be rewritten as:
\[
K(a)\simeq
\textstyle\int_{\R^n}|x|\,|a(x)|{\rm\,d}x
+\|a\|_2^{2/3}\biggl(
\|a\|_1\bigl(\textstyle\int_{\R^n}|x|\,|a(x)|{\rm\,d}x\bigr)+\|a\|_{8/7}^4
\biggr)^{2/3}
\qquad(n=2).
\]

\subsection{The inductive procedure for the construction of  $\{f^{(m)}\}_{m\in\N}$ and
$\{u^{(m)}\}_{m\in\N}$.} 
\label{sec:induci}

We now generalise the procedure of the previous subsection, by first recursively generating a sequence of external forces $f^{(m)}$, next constructing the corresponding
 solutions $u^{(m)}$ of the Navier-Stokes equations:
\begin{equation*}\tag{N-S$_m$}
u^{(m)}(t)=e^{t\Delta}a 
+
\int_0^t e^{(t-s)\Delta}\PP\nabla\cdot f^{(m)}(s)\text{\,d}s
-
\int_0^t e^{(t-s)\Delta}\PP\nabla\cdot (u^{(m)}\otimes u^{(m)})(s)\text{\,d}s, 
\end{equation*}
for $m\in\N$ by Fujita--Kato's method.

First of all we set 
\[
f^{(0)}\equiv0,
\]
 so that $u^{(0)}$ is just the Navier-Stokes flow with vanishing external force constructed before.
Next, let $\phi$ be any compactly supported measurable function in $\R^n\times[0,\infty)$, such that
\begin{equation}
\label{inte1}
\phi\in L^\infty_c(\R^n\times \R^+)\quad
\text{and}
\quad\int_0^\infty\!\!\! \int_{\R^n} \phi(x,t)\text{\,d}x\text{\,d}t=1.
\end{equation}

Moreover,  we set, for
$m\in\N$,
\begin{equation}
\label{compon}
\begin{split}
c_{k\ell}^{(m)}
&=\int_0^\infty\!\!\!\int_{\R^n} 
u^{(m)}_k u_\ell^{(m)} \text{\,d}y\text{\,d}s,
\qquad k,\ell=1,\dots,n,
\\
\bar{c}^{(m)}
&=c_{11}^{(m)}+\dots+c_{nn}^{(m)},
\end{split}
\end{equation}
where $u^{(m)}=(u^{(m)}_1,\ldots, u^{(m)}_n)$,
and, for $m=1,2\ldots$,
\begin{equation}
\label{eq:recf}
f^{(m)}_{k\ell}(x,t)
=\begin{cases}
c_{k\ell}^{(m-1)}\phi(x,t) & k\neq \ell, \\
(c_{kk}^{(m-1)}-\bar{c}^{(m-1)})\phi(x,t) &k=\ell.
\end{cases}
\end{equation}
Moreover, we denote
\begin{equation}
 \label{def:im}
 I^{(m)}=\int_0^\infty\!\!\!\int_{\R^n} |u^{(m)}(x,s)|^2{\rm\,d}x{\rm\,d}s.
\end{equation}
At this stage, the definition of $f^{(m)}$,  $u^{(m)}$ and $I^{(m)}$, for $m\ge1$, is only formal.
For the solutions $u^{(m)}$ to be well-defined and global-in-time (it is a priori not obvious that the lifetime of the solution of (N-S$_m$) is indeed infinite),
we need to ensure that the external force $f^{(m)}$ does satisfy an appropriate smallness
condition.  We will specify this condition below, by prescribing an additional smallness condition on the function~$\phi$.
To make our definitions of $f^{(m)}$, $u^{(m)}$ and $I^{(m)}$  rigorous for all natural number~$m$, we will proceed by induction.

Notice first that our estimates in the previous subsection imply
\[
u^{(0)}\in X_{2n}
\quad\text{and}\quad u^{(0)}\in L^2\bigl(\R^+;L^2(\R^n)\bigr).
\]
The latter ensures that $I^{(0)}<\infty$, hence 
$c^{(0)}_{k\ell}$, $\bar c^{(0)}$, and so $f^{(1)}$, are all
well defined for $k,\ell\in\{1,\ldots,n\}$.
More precisely (see \eqref{lateru}):
\begin{equation}
\label{ioo}
 I^{(0)}\lesssim 
 J(a)^{4/(n+1)}\|a\|_2^{2(n-1)/(n+1)},
 \end{equation}
where $J(a)$ was defined in~\eqref{Jaa}.

Let $m\in\{1,2,\dots\}$.
Let us make the inductive assumptions (IA1)-(IA3) that a global mild solution 
$u^{(m-1)}$ of (N-S$_{m-1}$) does exist, 
such that
\begin{subequations}
\label{subab}
\begin{align}
\label{IA1}
\tag{IA1}
&u^{(m-1)}\in X_{2n},\\
\label{IA2}
\tag{IA2}
&u^{(m-1)}\in L^2\bigl(\R^+;L^2(\R^n)\bigr)
\end{align}
and that
\begin{equation}
\label{indui}
\tag{IA3}
I^{(m-1)}\le  L(a).
\end{equation}
\end{subequations}
Here, 
\begin{equation}
 \label{Lan}
L(a):=\gamma_n J(a)^{4/(n+1)}\|a\|_2^{2(n-1)/(n+1)},
\end{equation}
where the constant $\gamma_n>0$ (depending only on the space dimension) will be specified below.
The first condition to be imposed on $\gamma_n$ is the compatibility with~\eqref{ioo}, which is of course possible.

By \eqref{indui}, $I^{(m-1)}<\infty$ and so $f^{(m)}$ is well defined by~\eqref{eq:recf}.
Our next goal is to show that a global solution $u^{(m)}$ of (N-S$_m$) does exist, 
satisfying~\eqref{IA1}--\eqref{indui} with the integer $m$ instead of $m-1$.

We note that
\begin{equation*}
\begin{split}
\left\|\int_0^t 
 e^{(t-s)\Delta} \PP\nabla\cdot f^{(m)}(s)
\text{\,d}s\right\|_{2n}
&\lesssim\int_0^t (t-s)^{-1/2}
s^{-{3}/{4}}\text{\,d}s
\sup_{s>0}s^{3/4}\|f^{(m)}(s)\|_{2n}\\
&\lesssim t^{-1/4}
\sup_{s>0}s^{3/4}\|f^{(m)}(s)\|_{2n}.
\end{split}
\end{equation*}
Moreover, by the definition of~$f^{(m)}$, we see that 

\begin{equation*}
\|f^{(m)}(s)\|_p 
\lesssim I^{(m-1)} \|\phi(s)\|_p,
\end{equation*} 
for $1\leq p \le\infty$.
So we obtain that
\begin{equation*}
\begin{split}
\biggl\|
e^{t\Delta}a 
+ \int_0^t  e^{(t-s)\Delta} \PP\nabla\cdot f^{(m)}(s) \text{\,d}s
\biggr\|_{X_{2n}}
&\lesssim \|a\|_n+I^{(m-1)}\sup_{0<s<\infty}s^{3/4}\|\phi(s)\|_{2n}\\
&\lesssim \|a\|_n+L(a)\sup_{0<s<\infty}s^{3/4}\|\phi(s)\|_{2n}.
\end{split}
\end{equation*}
The first condition that we need to put on $\phi$ is
\begin{equation}
\label{small:phi1}
 L(a)\sup_{0<s<\infty}s^{3/4}\|\phi(s)\|_{2n} \le \|a\|_n.
 \tag{A1}
\end{equation}
Under condition~\eqref{small:phi1}, we deduce
\begin{equation*}
\biggl\|
e^{t\Delta}a 
+ \int_0^t  e^{(t-s)\Delta} \PP\nabla\cdot f^{(m)}(s) \text{\,d}s
\biggr\|_{X_{2n}} \lesssim \|a\|_n.
\end{equation*}
We can now deduce from the usual fixed point method and assumption~\eqref{small-a}
that, choosing $\delta>0$ small enough, (D5),
a solution $u^{(m)}$ of (N-S$_m$) does exist,
such that
\[
u^{(m)}\in X_{2n} \quad\text{and}\quad \|u^{(m)}\|_{X_{2n}}\lesssim \|a\|_n.
\]
Such solution $u^{(m)}$ can be obtained as the limit for $k\to\infty$, in the $X_{2n}$-norm, of the sequence of approximate solutions\footnote
{If necessary, we should put the vector symbol $\vec u^{(m)}_k$ just because we already used 
with a different meaning $u^{(m)}_k$ (=the $k$ component of the vector $u^{(m)}$, in~\eqref{compon}. 
For the simplicity, we omit the vector symbol since we can distinguish them from the context.} 
$u_k^{(m)}$ ($k=0,1,\ldots$)
\begin{equation}
\label{app-mk}
u^{(m)}_{k+1}(t)=e^{t\Delta}a+\int_0^t e^{(t-s)\Delta}\PP\nabla\cdot f^{(m)}(s){\rm\,d}s
+G( u^{(m)}_{k}, u^{(m)}_{k})(t),
\end{equation}
with
\[
 u^{(m)}_0(t)=e^{t\Delta}a+\int_0^t e^{(t-s)\Delta}\PP\nabla\cdot f^{(m)}(s){\rm\,d}s.
\]
Similarly as we did in subsection~\ref{sub:0}, we now verify that $u^{(m)}\in X_n\cap X_\infty$.
For this, we recall that (N-S$_{m}$) reads
$u^{(m)}(t)=e^{t\Delta}a+\int_0^t e^{(t-s)\Delta}\PP\nabla\cdot f(s){\rm\,d}s+G(u^{(m)},u^{(m)})$.
Recalling the arguments in~\eqref{argu} we only have to check that the forcing term
$ \int_0^t  e^{(t-s)\Delta} \PP\nabla\cdot f^{(m)}(s) \text{\,d}s$ does belong to $X_n\cap X_\infty$.
But this is immediate, as one can see applying twice~\eqref{est:F} with~$p=1$ and $p=(2n)'$, to get
\[
\begin{split}
\biggl\|
\int_0^t  e^{(t-s)\Delta} \PP\nabla\cdot f^{(m)}(s) \text{\,d}s
\biggr\|_{X_n} 
&\lesssim \sup_{s>0}s^{1/2}\|f^{(m)}(s)\|_n\\
& \lesssim L(a)\sup_{s>0}s^{1/2}\|\phi(s)\|_{n}.
\end{split}
\]
and
\[
\begin{split}
\biggl\|
\int_0^t  e^{(t-s)\Delta} \PP\nabla\cdot f^{(m)}(s) \text{\,d}s
\biggr\|_{X_\infty} 
&\lesssim \sup_{s>0}s^{3/4}\|f^{(m)}(s)\|_{2n}\\
&\lesssim  L(a) \sup_{s>0}s^{3/4}\|\phi(s)\|_{2n}.
\end{split}
\]
We now choose $\phi$ such that, in addition to~\eqref{small:phi1}, it satisfies
\begin{equation}
\label{small:phi2}
 L(a)\sup_{s>0}s^{1/2}\|\phi(s)\|_{n} \le \|a\|_n.
 \tag{A2}
\end{equation}
Then, just like we did in~\eqref{argu}, we get
\begin{align}
\|u^{(m)}\|_{X_n}&\lesssim \|a\|_n\\
\label{binf}
\|u^{(m)}\|_{X_{\infty}}&\lesssim\|a\|_n.
\end{align}

Let us go further with the relevant $L^2$-estimates for~$u$:
observe that, from~\eqref{est:F},
\begin{equation}
\label{qd9}
\begin{split}
 \biggl\|\int_0^t e^{(t-s)\Delta}\PP\nabla\cdot f^{(m)}(s){\rm\,d}s\biggr\|_2
 &\le \pi c_1\sup_{s>0} s^{1/2}\|f^{(m)}(s)\|_2\\
 &\le \pi c_1 I^{(m-1)}\sup_{s>0}s^{1/2}\|\phi(s)\|_2\\
 & \le \pi c_1 L(a)\sup_{s>0}s^{1/2}\|\phi(s)\|_2.
 \end{split}
\end{equation}
We now choose $\phi$ in a such way that  it satisfies also
\begin{equation}
\label{Ass3}
 \tag{A3}
 \pi c_1 L(a)\sup_{s>0}s^{1/2}\|\phi(s)\|_2
  \le \|a\|_2.
\end{equation}
Then, estimating the approximate solutions $u^{(m)}_k$ for $k\in\N$, in the same way as we argued to
obtain~\eqref{L2bo}, from~\eqref{app-mk}, we obtain
\[
\begin{split}
 \|u^{(m)}_{k+1}(t)\|_2
 &\le
2\|a\|_{2}
+c_1\int_0^{t}(t-s)^{-1/2}\|u^{(m)}_k(s)\|_2\|u^{(m)}_k(s)\|_\infty{\rm \,d}s\\
&\le 
2\|a\|_2
+\pi c_1\|u_k^{(m)}\|_{X_\infty}\sup_{s>0}\|u^{(m)}_k(s)\|_2.\\
\end{split}
\]
As $\pi c_1\|u^{(m)}_k\|_{X_\infty}\lesssim\|a\|_n$, from~\eqref{small-a},  by an appropriate choice of $\delta>0$, (D6),
we can ensure that
\[
\pi c_1\|u^{(m)}_k\|_{X_\infty}\le1/2<1
\]
and by iteration
\begin{equation}
\label{uborn}
\begin{split}
\sup_{t>0}\|u^{(m)}(t)\|_2
&\le 
4\|a\|_{2}.
\end{split}
\end{equation}

Next step consists in proving decay estimates for $\|u^{(m)}(t)\|_2$.
As in the previous section, we begin establishing a (uniform-in-$m$) quantified version of the decay estimate
$\|u^{(m)}(t)\|_2=O(t^{-3/8})$.
Let us recall that $\|e^{t\Delta}a\|_2\lesssim  \|a\|_{4n/(3+2n)} t^{-3/8}$.
Concerning the forcing term, we have
 \begin{equation}
 \label{f38}
\begin{split}
 \biggl\|\int_0^t e^{(t-s)\Delta}\PP\nabla\cdot f^{(m)}(s){\rm\,d}s\biggr\|_ 2
 &\lesssim
 \Bigl(\int_0^t(t-s)^{-1/2}s^{-7/8}{\rm\,d}s\Bigr)\sup_{s>0}s^{7/8}\|f^{(m)}(s)\|_2\\
 &\lesssim t^{-3/8} L(a)\sup_{s>0}s^{7/8}\|\phi(s)\|_2.
 \end{split}
\end{equation}
This leads us to add another condition to $\phi$, namely
\begin{equation}
\label{Ass4}
 \tag{A4}
 L(a)\sup_{s>0}s^{7/8}\|\phi(s)\|_2 \le \|a\|_{4n/(3+2n)}.
 \end{equation}

Therefore
\begin{equation}
\sup_{t>0} t^{3/8}\biggl\|e^{t\Delta}a
  +\int_0^t e^{(t-s)\Delta}\PP\nabla\cdot f^{(m)}(s){\rm\,d}s     \biggr\|_2 
  \lesssim  \|a\|_{4n/(3+2n)}.
\end{equation}

We can now reproduce the same calculations as in~\eqref{decl2a}-\eqref{est3/8} 
with an appropriate choice of $\delta>0$, (D7), by (S)
and we obtain
\begin{equation}
 \label{est-m3/8}
 \sup_{t>0} \,t^{3/8}\|u^{(m)}(t)\|_2
\lesssim \|a\|_{4n/(3+2n)}.
\end{equation}

Next, let us prescribe the conditions on $\phi$ in order to obtain 
a (uniform-in-$m$) quantified version of the estimate $\|u^{(m)}(t)\|_2=O(t^{-(n+1)/4})$.
First of all, let us recall estimate~\eqref{hean1}, that reads
\begin{equation*}
 \label{lin1}
 \|e^{t\Delta}a\|_2\lesssim \Bigl(\int_{\R^n}|x|\, |a(x)|{\rm\,d}x\Bigr)^{1/2}\|a\|_1^{1/2}t^{-(n+1)/4}.
\end{equation*}
To get the same decay rate for the forcing term, we set $\frac1q=1-\frac{1}{2n}$. Using~\eqref{est:F} with $p=2n/(n+1)$,
we obtain 
\begin{equation*}
\begin{split}
 \biggl\|\int_0^t e^{(t-s)\Delta}\PP\nabla\cdot f^{(m)}(s){\rm\,d}s\biggr\|_ 2
&\lesssim  t^{-(n+1)/4}\int_0^{t/2}\|f^{(m)}(s)\|_q{\rm\,d}s
  +\int_{t/2}^t(t-s)^{-1/2}\|f^{(m)}(s)\|_2{\rm\,d}s \\
&\lesssim  t^{-(n+1)/4} L(a)\Bigl(\int_0^\infty\|\phi(s)\|_q{\rm\,d}s
  +  \sup_{s>0}s^{(n+3)/4}\|\phi(s)\|_2\Bigr).
  \end{split}
 \end{equation*}
We then choose $\phi$ in such a way that
\begin{equation}
 \label{Ass5}
  L(a)\Bigl(\int_0^\infty\|\phi(s)\|_{2n/(2n-1)}{\rm\,d}s
  +  \sup_{s>0}s^{(n+3)/4}\|\phi(s)\|_2\Bigr)
 \le 
  \Bigl(\int_{\R^n}|x|\, |a(x)|{\rm\,d}x\Bigr)^{1/2}\|a\|_1^{1/2}.
  \tag{A5}
\end{equation}
Then we get
\begin{equation}
 \sup_{t>0} t^{(n+1)/4}\biggl\|e^{t\Delta}a+\int_0^t e^{(t-s)\Delta}\PP\nabla\cdot f^{(m)}(s){\rm\,d}s\biggr\|_ 2
 \lesssim  \Bigl(\int_{\R^n}|x|\, |a(x)|{\rm\,d}x\Bigr)^{1/2}\|a\|_1^{1/2}.
\end{equation}
Then, reproducing the same computations as in~\eqref{decl3}-\eqref{Jaa}
with an appropriate choice of $\delta>0$, (D8), by
(S),
we get
\begin{equation}
\label{jia}
 \begin{split}
 \sup_{t>0}t^{(n+1)/4}\|u^{(m)}(t)\|_2
 &\lesssim
\|a\|_1^{1/2}\Bigl(\textstyle\int_{\R^n}|x|\,|a(x)|{\rm \,d}x\Bigr)^{1/2}
+\|a\|_{4n/(3+2n)}^2\\
&=J(a).
 \end{split}
\end{equation}
>From the latter estimate, and the already established uniform bound $\|u^{(m)}(t)\|_2\lesssim \|a\|_2$,
arguing as we did before in~\eqref{lateru}, we get
\begin{equation}
\label{laterum}
\int_0^\infty \|u^{(m)}(s)\|_2^2\text{\,d}s 
 \lesssim J(a)^{4/(n+1)}\|a\|_2^{2(n-1)/(n+1)}.
\end{equation} 
Now, let us be more explicit with the constants that appear in two of the estimates obtained before:
From~\eqref{ioo}, we infer that there exists $K_n>0$ (independent $a$) such that
\begin{equation}
\label{ioo-k}
 I^{(0)}\le K_n
 J(a)^{4/(n+1)}\|a\|_2^{2(n-1)/(n+1)}.
 \end{equation}
From~\eqref{laterum}, we  deduce the existence of $K'_n>0$ (independent on $m$ and $a$) such that
 \begin{equation}
\label{laterum-k}
I^{(m)}
 \le K'_n J(a)^{4/(n+1)}\|a\|_2^{2(n-1)/(n+1)} \qquad(m=1,2,\ldots).
\end{equation}
This leads us to choose, in \eqref{Lan}, $\gamma_n=\max\{K_n,K'_n\}$.
It then follows that $I^{(0)}\le L(a)$ and that
$I^{(m)}\le L(a)$. This allows to close the inductive argument~\eqref{indui}.

Summarising, we proved the existence of $\delta>0$ (depending only on the space dimension, in agreement with conditions (D1)--(D8)) such that, 
under the smallness assumption $\|a\|_n\le \delta$ and the smallness conditions
(A1)--(A5) on $\phi$,  there exists a sequence of global solutions $u^{(m)}$ $(m=0,1,2,\ldots)$
of (N-S$_m$), which is bounded in $X_{2n}$, and also bounded in $X_n\cap X_\infty$, in 
$L^\infty\bigl(\R^+;L^2(\R^n)\bigr)$,
and in $L^2\bigl(\R^+;L^2(\R^n)\bigr)$.
Strengthening a little bit the smallness conditions on $\phi$, it would be possible in the same way to 
get also the boundedness of $u^{(m)}$ under the stronger norm
$v\mapsto\sup\limits_{t>0}(1+t)^{(n+2)/4}\|v(t)\|_2$.

\subsection{Convergence of $\{u^{(m)}\}_{m\in\N}$.}
In this section we study the convergence of 
$u^{(m)}$ in the space
\begin{equation}
 \label{espY}
 Y=\{v\in L^\infty\bigl(\R^+;L^2(\R^n)\bigr)\colon \|v\|_Y:=\sup_{s>0}(1+s)^{(n+1)/4}\|v(s)\|_2<\infty\}.
\end{equation}
This is not a scale invariant space: what it does matter here is that~$Y$ is imbedded in $L^2\bigl(\R^+;L^2(\R^n)\bigr)$.
This non-invariance explains why an additional artificial smallness assumption on~$a$
appears in the next Lemma. In any
case, this artificial assumption
will be removed at the end of the proof of our theorem.

\begin{lemma}
\label{lem-sa}
Let $u^{(m)}$ be the  of solution of (N-S$_m$) constructed in the previous section.
There exist two constants 
$\delta,\delta^\prime>0$, depending only on the space dimension, such that
if $\|a\|_n\le \delta$ and 
\begin{equation}
\label{small-st}
\|a\|_n^{1/n} (J(a)^{1-1/n}+\|a\|_2^{1-1/n})\le \delta^\prime,
\tag{S$^\prime$}
\end{equation}
where $J(a)$ was defined in~\eqref{Jaa}, then,  for $m\in\N$,
\begin{equation}
 \label{est:Y}
 \|u^{(m+1)}-u^{(m)}\|_Y
 \lesssim
  \Bigl\| \int_0^t e^{(t-s)\Delta}\PP\nabla\cdot
[f^{(m+1)}-f^{(m)}](s)\text{\,d}s \Bigr\|_{Y}.
\end{equation}
\end{lemma}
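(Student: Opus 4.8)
The plan is to estimate the difference $w^{(m)}:=u^{(m+1)}-u^{(m)}$ by subtracting the two integral equations (N-S$_m$) and (N-S$_{m+1}$). Writing $d^{(m)}:=\int_0^t e^{(t-s)\Delta}\PP\nabla\cdot[f^{(m+1)}-f^{(m)}](s)\,{\rm d}s$ for the forcing contribution, the difference of the Duhamel nonlinear terms can be reorganised in the usual bilinear way as
\[
u^{(m+1)}\otimes u^{(m+1)}-u^{(m)}\otimes u^{(m)}
= w^{(m)}\otimes u^{(m+1)}+u^{(m)}\otimes w^{(m)},
\]
so that
\[
w^{(m)}=d^{(m)}+G\bigl(w^{(m)},u^{(m+1)}\bigr)+G\bigl(u^{(m)},w^{(m)}\bigr).
\]
First I would bound the $Y$-norm of each bilinear term. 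The idea is exactly the one used to derive~\eqref{decl3}: split the time integral into $\int_0^{t/2}$, where one puts the full $\nabla\cdot$ onto the kernel $F$ and uses~\eqref{est:F} with $p=1$ together with the $L^2$-bounds $\|u^{(m)}(s)\|_2, \|w^{(m)}(s)\|_2\lesssim(1+s)^{-(n+1)/4}$ (these hold because the solutions constructed in Section~\ref{sec:induci} are bounded in $Y$, by the last remark of that subsection), and $\int_{t/2}^t$, where one uses~\eqref{est:F} with $p=1$ but estimates one factor in $L^2$ and the other in $L^\infty$ via the uniform bound $\|u^{(m)}\|_{X_\infty}\lesssim\|a\|_n$ (and similarly, crucially, $\|w^{(m)}\|_{X_\infty}\lesssim\|a\|_n$, which follows from the $X_{2n}$-bound on the $u^{(m)}$ and Kato's estimate~\eqref{kato-est}). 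Carrying out the elementary Beta-integrals gives the decay rate $(1+t)^{-(n+1)/4}$ with a constant of the form $C_n\bigl(\|a\|_n + \text{something like } J(a)^{2/(n+1)}\|a\|_2^{(n-1)/(n+1)}\bigr)\|w^{(m)}\|_Y$ for the mixed terms. The point is that the prefactor multiplying $\|w^{(m)}\|_Y$ is exactly (up to absolute constants) $\|a\|_n^{1/n}(J(a)^{1-1/n}+\|a\|_2^{1-1/n})$, or some comparable scale-consistent quantity; the artificial smallness hypothesis~\eqref{small-st} with $\delta'$ small enough makes this prefactor $\le 1/2$.

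Then I would absorb: from $w^{(m)}=d^{(m)}+\bigl(\text{terms bounded by }\tfrac12\|w^{(m)}\|_Y\bigr)$ one gets $\|w^{(m)}\|_Y\le 2\|d^{(m)}\|_Y$, which is~\eqref{est:Y} up to the implied constant. One subtlety to check along the way is that $w^{(m)}$ actually lies in $Y$ to begin with (so that the absorption is legitimate), but this is immediate since both $u^{(m)}$ and $u^{(m+1)}$ belong to $Y$ by the construction of the previous subsection; the estimate then upgrades a finite bound into the quantitative contraction~\eqref{est:Y}. I would also need the $L^\infty$-control of the difference: since $u^{(m+1)}_0-u^{(m)}_0$ is a pure forcing term $d^{(m)}$ and the nonlinear fixed-point maps are contractions on $X_{2n}$ uniformly in $m$, one gets $\|w^{(m)}\|_{X_{2n}}\lesssim\|d^{(m)}\|_{X_{2n}}\lesssim\|f^{(m+1)}-f^{(m)}\|(\cdots)$, from which $\|w^{(m)}\|_{X_\infty}\lesssim\|a\|_n$ as needed above — or, more simply, $\|w^{(m)}\|_{X_\infty}\le\|u^{(m+1)}\|_{X_\infty}+\|u^{(m)}\|_{X_\infty}\lesssim\|a\|_n$ directly.

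The main obstacle, as the sentence preceding the Lemma already hints, is that $Y$ is \emph{not} scale-invariant, so one cannot just replay a scale-invariant fixed-point argument: the decay exponent $(n+1)/4$ (rather than the optimal $(n+2)/4$) is chosen precisely so that $Y\hookrightarrow L^2(\R^+;L^2(\R^n))$, which is what will later let us pass the bound $I^{(m)}\le L(a)$ to the limit. Consequently the time-integrals in the $\int_0^{t/2}$ piece are only borderline convergent and one must be careful that the constants stay dimensionally consistent, i.e. that the quantity multiplying $\|w^{(m)}\|_Y$ is genuinely the scale-invariant combination appearing in~\eqref{small-st}; getting this bookkeeping right — matching powers of $\|a\|_n$, $J(a)$ and $\|a\|_2$ so that~\eqref{small-st} does the job — is the delicate point. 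Everything else reduces to the same H\"older/Young estimates via~\eqref{est:F} already used repeatedly in Section~\ref{sec:proof}.
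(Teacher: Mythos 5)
Your overall architecture is the same as the paper's: decompose $u^{(m+1)}-u^{(m)}=\mathcal{I}_1^{(m)}+\mathcal{I}_2^{(m)}+\mathcal{I}_3^{(m)}$ with $\mathcal{I}_1^{(m)}=d^{(m)}$ and two bilinear difference terms, bound the bilinear terms in $Y$ by a small multiple of $\|u^{(m+1)}-u^{(m)}\|_Y$ (splitting $t\le1$ from $t\ge1$, and $\int_0^{t/2}$ from $\int_{t/2}^t$), then absorb; the absorption and the a priori membership $w^{(m)}\in Y$ are fine. The gap is in the single estimate that carries the content of the lemma, namely the far-field piece $\int_0^{t/2}$ for $t\ge1$ — exactly the ``bookkeeping'' you defer. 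Two problems there. First, with both factors of $w^{(m)}\otimes u^{(m+1)}$ measured in $L^2$ you have $\|w^{(m)}\otimes u^{(m+1)}\|_1\le\|w^{(m)}\|_2\|u^{(m+1)}\|_2$, so Young's inequality forces $\|F(\cdot,t-s)\|_2$, i.e.\ $p=2$ in~\eqref{est:F}, not $p=1$ as you wrote. Second, and more seriously, if you carry out the $L^2\times L^2$ pairing you describe, the prefactor of $\|w^{(m)}\|_Y$ that comes out is
\begin{equation*}
\sup_{t\ge1}\,t^{(n+1)/4}\,t^{-(n+2)/4}\int_0^{t/2}(1+s)^{-(n+1)/4}\|u^{(m+1)}(s)\|_2\,{\rm d}s
\;\lesssim\;\bigl(I^{(m+1)}\bigr)^{1/2}\;\lesssim\;J(a)^{2/(n+1)}\|a\|_2^{(n-1)/(n+1)},
\end{equation*}
which is indeed the ``something like'' you guessed — but this quantity is \emph{not} controlled by the stated hypothesis~\eqref{small-st}: one can make $\|a\|_n$ arbitrarily small (so that both (S) and (S$'$) hold) while $J(a)$ and $\|a\|_2$ are both large, e.g.\ by taking data of small amplitude but very spread out. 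So with your pairing the absorption does not close under the lemma's hypotheses. The quantity $\|a\|_n^{1/n}(J(a)^{1-1/n}+\|a\|_2^{1-1/n})$ in~\eqref{small-st} is produced by a \emph{specific} choice that you do not identify: pair $\|F(\cdot,t-s)\|_{2n/(n+1)}\simeq(t-s)^{-(n+1)/4}$ against $\|w^{(m)}(s)\|_2\,\|u^{(m+1)}(s)\|_{2n/(n-1)}$ and interpolate $\|u^{(m+1)}(s)\|_{2n/(n-1)}\le\|u^{(m+1)}(s)\|_2^{1-1/n}\|u^{(m+1)}(s)\|_\infty^{1/n}$, then feed in~\eqref{uborn}, \eqref{jia} and~\eqref{binf}. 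Asserting that the prefactor is ``exactly'' the (S$'$) quantity ``or some comparable scale-consistent quantity'' is not a proof, and the two candidates are genuinely incomparable.

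That said, your variant is salvageable as a \emph{different} lemma: with hypothesis $L(a)^{1/2}\le\delta'$ in place of~\eqref{small-st} your computation closes, and since $L(a_\lambda)=\lambda^{-n}L(a)$ that hypothesis could also be removed by the rescaling argument at the end of the proof of Theorem~\ref{thm;2}. But as written it does not establish the lemma under the stated hypothesis (S$'$).
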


\begin{proof}
We note that
\begin{equation*}
\begin{split}
u^{(m+1)}(t)-u^{(m)}(t) &=
\int_0^t e^{(t-s)\Delta}\PP\nabla\cdot
[f^{(m+1)}-f^{(m)}](s)\text{\,d}s
\\
&\quad +\int_0^t 
e^{(t-s)\Delta}\PP\nabla\cdot
\bigl[(u^{(m+1)}-u^{(m)})\otimes u^{(m+1)}\bigr](s)\text{\,d}s 
\\
&\quad +
\int_0^t e^{(t-s)\Delta}\PP\nabla\cdot
\bigl[u^{(m)}\otimes(u^{(m+1)}-u^{(m)})\bigr](s)\text{\,d}s
\\
&=:\mathcal{I}_1^{(m)}(t) 
+ \mathcal{I}_2^{(m)}(t)
+\mathcal{I}_3^{(m)}(t).
\end{split}
\end{equation*}
We make two separate estimates for~$\mathcal{I}_2^{(m)}$.
The first one  will be useful for $0\le t\le 1$ and relies on~\eqref{binf}:
\begin{equation*}
\begin{split}
\|\mathcal{I}_2(t)\|_2
&\le
c_{1}
\int_0^{t} (t-s)^{-1/2} \|u^{(m+1)}(s)-u^{(m)}(s)\|_2\,\|u^{(m+1)}(s)\|_{\infty}\text{\,d}s\\
&\lesssim
\|u^{(m+1)}-u^{(m)}\|_Y\|u^{(m+1)}\|_{X_\infty}\int_0^t(t-s)^{-1/2}(1+s)^{-(n+1)/4}s^{-1/2}\text{\,d}s\\
&\lesssim
\|u^{(m+1)}-u^{(m)}\|_Y\|a\|_n.
\end{split}
\end{equation*}

The second estimate of~$\mathcal{I}_2^{(m)}(t)$ will be useful for $t\ge1$. It is obtained using~\eqref{est:F} in the first inequality, interpolation in the second,~\eqref{binf}, \eqref{uborn} and~\eqref{jia} in the third inequality below:
\begin{equation*}
\begin{split}
\|\mathcal{I}_2(t)\|_2
&\le
c_{2n/(n+1)}
\int_0^{t/2} (t-s)^{-(n+1)/4} \|u^{(m+1)}(s)-u^{(m)}(s)\|_2\,\|u^{(m+1)}(s)\|_{2n/(n-1)}\text{\,d}s\\
&\qquad\qquad
  +c_{1}
\int_{t/2}^t(t-s)^{-1/2} \|u^{(m+1)}(s)-u^{(m)}(s)\|_2\,\|u^{(m+1)}(s)\|_\infty\text{\,d}s\\
&\lesssim
t^{-(n+1)/4} \|u^{(m+1)}-u^{(m)}\|_Y
\int_0^{t/2}(1+s)^{-(n+1)/4}\|u^{(m+1)}(s)\|_2^{1-1/n}\|u^{(m+1)}(s)\|_\infty^{1/n}\text{\,d}s\\
&\qquad\qquad
  + (1+t)^{-(n+1)/4}\|u^{(m+1)}-u^{(m)}\|_Y\|u^{(m)}\|_{X_\infty}\\
&\lesssim
 t^{-(n+1)/4}\|a\|_n^{1/n} \|u^{(m+1)}-u^{(m)}\|_Y
  \Bigl(
   \int_0^{t/2}(1+s)^{-(n+1)/4} \bigl(J(a)s^{-(n+1)/4}\wedge\|a\|_2\bigr)^{(1-1/n)}s^{-1/(2n)}\text{\,d}s
      + \|a\|_n^{1-1/n}\Bigr)\\
&\lesssim      
 t^{-(n+1)/4}\|a\|_n^{1/n} \|u^{(m+1)}-u^{(m)}\|_Y
  \Bigl(
  \ J(a)^{1-1/n}+\|a\|_2^{1-1/n}+\|a\|_n^{1-1/n}\Bigr).
 \end{split}
\end{equation*}

Combining the two previous  estimates for $\mathcal{I}_2^{(m)}(t)$, we deduce
\begin{equation}
\label{est:i2}
\|\mathcal{I}_2^{(m)}\|_Y
\lesssim \|a\|_n^{1/n}  \Bigl(
  \ J(a)^{1-1/n}+\|a\|_2^{1-1/n}+\|a\|_n^{1-1/n}\Bigr) \|u^{(m+1)}-u^{(m)}\|_Y.
\end{equation}
In the same manner, we have
\begin{equation}
\label{est:i3}
\|\mathcal{I}_3^{(m)}\|_Y
\lesssim \|a\|_n^{1/n}  \Bigl(
  \ J(a)^{1-1/n}+\|a\|_2^{1-1/n}+\|a\|_n^{1-1/n}\Bigr) \|u^{(m+1)}-u^{(m)}\|_Y.
\end{equation}
Now, if $\delta>0$,
is small enough, (D9), and $\delta^\prime>0$ is suitably small as well,
then we deduce~\eqref{est:Y} from~\eqref{est:i2}-\eqref{est:i3}.
\end{proof}

Throughout the remaining part of this subsection, we assume that $a$ fulfils the smallness condition~\eqref{small-st}, along with~\eqref{small-a}. For the convergence of $\{u^{(m)}\}$, we choose $\delta>0$ in a such way
that conditions (D1)-(D9) are fulfilled. This ensures that all our previous estimates hold true.

Next, we estimate $\mathcal{I}_1^{(m)}(t)$: 
for $0<t\leq 1$, we have
\begin{equation*}
\begin{split}
\|\mathcal{I}_1^{(m)}(t)\|_2
&\leq
c_1\int_0^t  (t-s)^{-1/2} \|f^{(m+1)}(s)-f^{(m)}(s)\|_2\text{\,d}s
\\
&\lesssim
 \sup_{s>0}\, s^{1/2}
\|f^{(m+1)}(s)-f^{(m)}(s)\|_2.
\end{split}
\end{equation*}
But
\begin{equation*}
\begin{split}
\|f^{(m+1)}(s)-f^{(m)}(s)\|_2
&\leq
\sum_{k,\ell=1}^n 
\|f_{k\ell}^{(m+1)}(s)-f_{k\ell}^{(m)}(s)\|_2
\\
&=\sum_{k\neq \ell}
\bigl|c_{k\ell}^{(m)}-c_{k\ell}^{(m-1)}
\bigr| \|\phi(s)\|_2
+
\sum_{k=1}^n
\bigl|(c_{kk}^{(m)}-\bar{c}^{(m)})-
(c_{kk}^{(m-1)}-\bar{c}^{(m-1)})\bigr|
\|\phi(s)\|_2
\end{split}
\end{equation*}
For the case $k\neq \ell$, applying~\eqref{est-m3/8} in the last inequality, and the fact that $\int_0^\infty(1+s)^{-(n+1)/4}s^{-3/8}\text{\,d}s<\infty$,  we have:
\begin{equation*}
\begin{split}
\bigl|c_{k\ell}^{(m)}-c_{k\ell}^{(m-1)}
\bigr| 
&=\left|
\int_0^\infty\!\!\!\int_{\R^n}u_k^{(m)}u_\ell^{(m)}
-u_k^{(m-1)}u_\ell^{(m-1)}\text{\,d}y\text{\,d}s
\right|
\\
&\lesssim
\int_0^\infty\|u^{(m)}(s)-u^{(m-1)}(s)\|_2 \bigl(\|u^{(m)}(s)\|_2+\|u^{(m-1)}\|_2\bigr)\text{\,d}s\\
 &\lesssim
 \|u^{(m)}-u^{(m-1)}\|_Y \|a\|_{4n/(3+2n)}.
\end{split}
\end{equation*}

For the case $k=\ell$ we  have
\begin{equation*}
\begin{split}
\bigl|(c_{kk}^{(m)}-\bar{c}^{(m)})-
(c_{kk}^{(m-1)}-\bar{c}^{(m-1)})\bigr|
&=\sum_{i\not=k} \bigl|(c_{ii}^{(m)}-c_{ii}^{(m-1)})\bigr|,\\
\end{split}
\end{equation*}
so that the same estimates as before applies.
Therefore,
\begin{equation*}
\|f^{(m+1)}(s)-f^{(m)}(s)\|_2
\lesssim
\|\phi(s)\|_2 \|a\|_{4n/(3+2n)}\|u^{(m)}-u^{(m-1)}\|_Y .
\end{equation*}

Therefore we obtain that for $0<t<1$
\[
\|\mathcal{I}_1^{(m)}(t)\|_2
\lesssim \Bigl(\sup_{s<1}s^{1/2}\|\phi(s)\|_2\Bigr)\|a\|_{4n/(3+2n)}\|u^{(m)}-u^{(m-1)}\|_Y  .
\]

For $t>1$, by similar way, we have with $\frac{1}{q}=1-\frac{1}{2n}$ 
\begin{equation*}
\begin{split}
\|\mathcal{I}_1^{(m)}(t)\|_2
&\lesssim 
\int_0^{t/2} (t-s)^{-\frac{n+1}{4}} 
\|f^{(m+1)}(s)-f^{(m)}(s)\|_q 
{\rm \,d}s
+
\int_{t/2}^t
(t-s)^{-1/2}
\|f^{(m+1)}(s)-f^{(m)}(s)\|_2{\rm \, d}s
\\
&\lesssim
t^{-\frac{n+1}{4}}
\Bigl(
\int_0^\infty \|\phi(s)\|_q {\rm \,d}s
+
\sup_{s>0} s^{\frac{n+3}{4}}\|\phi(s)\|_2
\Bigr)\|a\|_{4n/(3+2n)}\|u^{(m)}-u^{(m-1)}\|_Y. 
\end{split}
\end{equation*}
Then we obtain 
\begin{equation*}
\|\mathcal{I}_1^{(m)}\|_Y
\lesssim
\Bigl(
\sup_{s<1}s^{1/2}\|\phi(s)\|_2+
\int_0^\infty \|\phi(s)\|_{2n/(2n-1)} {\rm \,d}s
+
\sup_{s>0} s^{\frac{n+3}{4}}\|\phi(s)\|_2
\Bigr)
\|a\|_{4n/(3+2n)}\|u^{(m)}-u^{(m-1)}\|_Y.
\end{equation*}

By Lemma~\ref{lem-sa},
 there exists a constant $\delta''>0$, only depending on the space dimension, such that if
\begin{equation}
 \label{Ass6}
  \Bigl(\sup_{s<1}s^{1/2}\|\phi(s)\|_2
  +
\int_0^\infty \|\phi(s)\|_{2n/(2n-1)} {\rm \,d}s
+
\sup_{s>0} s^{\frac{n+3}{4}}\|\phi(s)\|_2\Bigr) \|a\|_{4n/(3+2n)}\le \delta'',
 \tag{A6}
\end{equation}
then
\[
\|u^{(m+1)}-u^{(m)}\|_Y\le \textstyle \frac12 \|u^{(m)}-u^{(m-1)}\|_Y.
\]
This implies that $u^{(m)}$ converges in the $Y$-norm to some limit $v\in Y$.

The convergence $u^{(m)}\to v$ in $Y$ implies that $u^{(m)}\to v$ in  
$L^2\bigl(\R^+;L^2(\R^n)\bigr)$.
The fact that $v\in L^2\bigl(\R^+;L^2(\R^n)\bigr)$ allows us to define
\begin{equation*}
\begin{split}
f^{(\infty)}_{k\ell}(x,t)
&=\begin{cases}
c_{k\ell}^{(\infty)}\phi(x,t) & k\neq \ell, \\
(c_{kk}^{(\infty)}-\bar{c}^{(\infty)})\phi(x,t) &k=\ell,
\end{cases}
\\
c_{k\ell}^{(\infty)}
&=\int_0^\infty\!\!\!\int_{\R^n} 
v_k v_\ell \text{\,d}y\text{\,d}s,
\qquad k,\ell=1,\dots,n,
\\
\bar{c}^{(\infty)}
&=c_{11}^{(\infty)}+\dots+c_{nn}^{(\infty)}.
\end{split}
\end{equation*}

In particular, we see that
$c_{k\ell}^{(m)} \to c_{k\ell}^{(\infty)}$ and
$\bar{c}^{(m)}\to \bar{c}^{(\infty)}$ as $m\to \infty$. 
Moreover, it holds that $\|f_{k\ell}^{(m)}(t)- f_{k\ell}^{(\infty)}(t)\|_p\to 0$ as $m\to \infty$ for all $t \geq 0$ and for all $1\leq p \leq \infty$. 

Hence,
$\int_0^t 
 e^{(t-s)\Delta} \PP \nabla\cdot f^{(m)}(s)\text{\,d}s
\to
\int_0^t 
 e^{(t-s)\Delta} \PP\nabla\cdot f^{(\infty)}(s)\text{\,d}s$
 for all $t>0$.
The strong convergence of $u^{(m)}$ to $v$ in $Y$ allows to
pass to the limit in the nonlinear term of (N-S$_m$).
This implies that the limit $v$ satisfies the Navier--Stokes equations, with force 
$\nabla\cdot f^{(\infty)}$, and initial data $a$, in its integral form:
\begin{equation*}
v(t)=e^{t\Delta}a
+
\int_0^t 
e^{(t-s)\Delta} \PP\nabla\cdot f^{(\infty)}(s)\text{\,d}s
-
\int_0^t
  e^{(t-s)\Delta}\PP \nabla\cdot
(v\otimes v)(s)\text{\,d}s,
\end{equation*}
for all $t>0$.

Now that we know that $v$ and $f^{(\infty)}$ are well defined, one can easily prove, in a similar way, that $u^{(m)}\to v$ in $X_{2n}$, then,  in other spaces, e.g., in $BC\bigl(\R^+;L^n(\R^n)\bigl)$ by \eqref{small-a}, \eqref{small-st} and (A1)--(A6).
The convergence $u^{(m)}\to v$ could be proved to be true also  in $L^2$ with the optimal time-weight, 
i.e., $\sup\limits_{t>0}(1+t)^{(n+2)/2}\|u^{(m)}(t)-v(t)\|_2\to0$, if we suitable strengthen the smallness conditions, but this will not needed  to prove our theorem.

\subsection{Asymptotic  behavior of $v(t)$ as $t\to \infty$}

As before, we work in this section under
 the smallness assumptions on~$a$~\eqref{small-a}-\eqref{small-st} and all the smallness conditions
 on~$\phi$ previously specified.

In this subsection, we discuss the large time decay rate of the Lebesgue norms of $v(t)$, by applying the asymptotic profiles of 
Miyakawa and Schonbek
\cite{Miyakawa Schonbek}.
We put
\begin{equation*}
\beta_{k,\ell}=\int_0^\infty\!\!\!\int_{\R^n}v_k v_\ell\text{\,d}y\text{\,d}s-
\int_0^\infty\!\!\! \int_{\R^n} f^{(\infty)}_{k\ell}\text{\,d}y\text{\,d}s
\end{equation*}

Here, when $k\neq \ell$, we see that
\begin{equation*}
\beta_{k,\ell}=\int_0^\infty\!\!\!\int_{\R^n}
v_kv_{\ell}\text{\,d}y\text{\,d}s
-c_{k\ell}^{(\infty)}\int_0^\infty\!\!\! \int_{\R^n}\phi(y,s)\text{\,d}y\text{\,d}s=0.
\end{equation*}
When $k=\ell$, we see that
\begin{equation*}
\beta_{k,k}= 
\int_0^\infty\!\!\!\int_{\R^n}
v_k^2\text{\,d}y\text{\,d}s-(c_{kk}^{(\infty)}-\bar{c}^{(\infty)})
\int_0^\infty\!\!\! \int_{\R^n}\phi(y,s)\text{\,d}y\text{\,d}s=
\bar{c}^{(\infty)}.
\end{equation*}
Therefore,
\begin{equation*}
\beta_{k,\ell}=c\delta_{k\ell}
\quad\text{for some }c\in\R, \quad\text{and $k,\ell=1,\ldots,n$}.
\end{equation*}
Now, the application of Corollary~\ref{cor;MS} gives the desired conclusions for $v$, namely:
\begin{equation*}
\label{fast-decay-av}
\lim_{t\to\infty}
t^{\frac{1}{2}+\frac{n}{2}(1-\frac{1}{q})}
\left\|
v(t)-e^{t\Delta}a
\right\|_q=0, \qquad 1\le q\le\infty
\end{equation*}
and, under the additional
 moment condition $\int_{\R^n} y_ka_j(y)\text{\,d} y=0$ for all $j,k=1,\ldots,n$,
\begin{equation*}
\label{fast-decay-v}
\lim_{t\to\infty}
t^{\frac{1}{2}+\frac{n}{2}(1-\frac{1}{q})}
\left\|
v(t)
\right\|_q=0, \qquad 1\le q\le\infty.
\end{equation*}

\subsection{Completion of proof of Theorem \ref{thm;2}}

Let us first complete the proof under the additional artificial smallness assumption~\eqref{small-st}
on~$a$.
To finish the proof in this case, we only have to collect 
all the previous needed  conditions on~$\phi$ (namely,~\eqref{small:phi1}, \eqref{small:phi2},
\eqref{Ass3}, \eqref{Ass4}, \eqref{Ass5} and~\eqref{Ass6}) 
and to ensure their compatibility.

Let us recall that $J(a)$ and $L(a)$ were defined by~\eqref{Jaa} and~\eqref{Lan}:
\begin{equation*}
 J(a)=\|a\|_1^{1/2}\Bigl(\textstyle\int_{\R^n}|x|\,|a(x)|{\rm \,d}x\Bigr)^{1/2}
+\|a\|_{4n/(3+2n)}^2
\end{equation*}
and
\begin{equation*}
L(a)=\gamma_n J(a)^{4/(n+1)}\|a\|_2^{2(n-1)/(n+1)},
\end{equation*}
where $\gamma_n$ only depends on $n$.
So, the conditions to be imposed on $\phi$ are:
\[
\phi\in L^\infty_c(\R^n\times \R^+),\qquad \int_0^\infty\!\!\!\int_{\R^n}\phi(x,t)\text{\,d}x\text{\,d}t=1,
\]
and
\begin{equation}
\label{all-small}
\begin{split}
& L(a)\sup_{0<s<\infty}s^{3/4}\|\phi(s)\|_{2n} \le \|a\|_n,\\
& L(a)\sup_{0<s<\infty}s^{1/2}\|\phi(s)\|_{n} \le \|a\|_n,\\
& c_1L(a)\sup_{s>0}s^{1/2}\|\phi(s)\|_2\le \|a\|_2,\\
& L(a)\sup_{s>0}s^{7/8}\|\phi(s)\|_2 \le \|a\|_{4n/(3+2n)}\\
&  L(a)\Bigl(\textstyle\int_0^\infty\|\phi(s)\|_{2n/(n-1)}{\rm\,d}s
  +  \sup_{s>0}s^{(n+3)/4}\|\phi(s)\|_2\Bigr)
 \le   \Bigl(\int_{\R^n}|x|\, |a(x)|{\rm\,d}x\Bigr)^{1/2}\|a\|_1^{1/2},\\
&  \Bigl(\sup_{s<1}s^{1/2}\|\phi(s)\|_2
  +
\int_0^\infty \|\phi(s)\|_{2n/(2n-1)} {\rm \,d}s
+
\sup_{s>0} s^{\frac{n+3}{4}}\|\phi(s)\|_2\Bigr) \|a\|_{4n/(3+2n)}\le \delta'',\\
\end{split}
\tag{A\,1--6}
\end{equation}
Recall that $\delta''>0$ is some constant depending  only on $n$.
In fact, nowhere we really needed that $\phi\in L^\infty_c(\R^n\times\R^+)$: the more general
condition $\phi\in L^1(\R^n\times\R^+)$, together with~\eqref{all-small} would have been enough.
In any case, the most obvious way to construct such a function $\phi$
is to start from a compacly supported and essentially bounded function 
$\Phi\in L^\infty_c(\R^n\times \R^+)$, supported in a cube 
$K=[-M,M]^n\times[0,T_0]$ for some $M>0$ and $T_0>0$, such that  
$\int_0^\infty\!\!\!\int_{\R^n}\Phi(x,t)\text{\,d}x\text{\,d}t=1$.
But,  for any $1<p\le\infty$, we have $\|R^{-n}\Phi(\cdot/R,t)\|_p\to0$ as $R\to+\infty$. Therefore, there exists
\[
R_0=R_0\bigl(n,M,T_0,\|a\|_1,\|a\|_n,\textstyle\int_{\R^n}|x|\,|a(x)|\text{\,d}x\bigr),
\]
(with $R_0$ depending only on the above mentioned parameters), such that, if we take $R\ge R_0$ and
\[
\phi(x,t)=R^{-n}\Phi(x/R,t),
\]
then the function $\phi$ satisfies all the required conditions~\eqref{all-small}.

This completes the proof of Theorem~\ref{thm;2}, at least under the additional smallness condition~\eqref{small-st} for~$a$, that was needed in Lemma~\ref{lem-sa}. So, it now remains
to show that this smallness condition~\eqref{small-st} can be removed.

This can be done via a standard scaling argument. Indeed, assuming (according with the assumptions
of Theorem~\ref{thm;2}) that 
 $\int |a(x)|(1+|x|)\text{\,d}x<\infty$, and that  $\|a\|_n<\delta$, we consider the rescaled data $a_\lambda=\lambda a(\lambda\cdot)$.
 We have $\|a_\lambda\|_n=\|a\|_n$, $\|a_\lambda\|_2=\lambda^{1-n/2}\|a\|_2$ and  $J(a_\lambda)=\lambda^{-(n-1/2)}J(a)$.
 
 So condition~\eqref{small-st} is fulfilled for the rescaled initial data $a_{\lambda}$, provided we choose $\lambda\ge\lambda_0$ for some $\lambda_0=\lambda_0(n)$ depending only on the space-dimension. This is true also when $n=2$, as in this case $\|a\|_2$ is itself assumed to be small.
By what we proved so far, we can construct an external force $\nabla\cdot f_\lambda$ as above (with $f_\lambda$ with compact support in space-time) and a solution $u_{\lambda}$ of the Navier--Stokes equation arising from $a_\lambda$ with force $\nabla\cdot f_{\lambda}$, that satisfies 
$\|u_\lambda(t)-e^{t\Delta}a_\lambda\|_q=o(t^{-\frac12-\frac n2(1-\frac1q)})$ as $t\to\infty$ for all $1\le q\le\infty$.
Now, let us set $u(x,t)=\lambda^{-1} u_\lambda(\lambda^{-1} x,\lambda^{-2} t)$ and 
$f(x,t)=\lambda^{-2} f_\lambda (\lambda^{-1}x,\lambda^{-2}t)$.
Then $f$ is compactly supported in space-time and $u$ solves (N-S). Moreover, 
$\|u(t)-e^{t\Delta}a\|_q=o(t^{-\frac12-\frac n2(1-\frac1q)})$ as $t\to\infty$.

Moreover, $a$ has vanishing first-order moments if and only if the same is true for $a_\lambda$.
Therefore, we get under this moment condition the rapid dissipation property 
$\|u(t)\|_q=o(t^{-\frac12-\frac n2(1-\frac1q)})$ as $t\to\infty$.

This  fully establishes Theorem \ref{thm;2}.

\medskip
\textbf{Acknowledgement.} The authors are grateful to Professor Taku Yanagisawa for his comments and to the referees for their careful reading.
The second author is partially supported by JSPS Grant-in-Aid for Young Scientists (B) 17K14215.


\end{document}